\newtheorem{lemma}{Lemma}
\newtheorem{theorem}{Theorem}
\newtheorem*{remark}{Remark}
\title[A uniqueness lemma]{A uniqueness lemma with applications to regularization and incompressible fluid mechanics.}
\author{Guillaume L\'{e}vy$^{1}$}
\address{$^{1}$Laboratoire Jacques-Louis Lions, UMR 7598, Université Pierre
et Marie Curie, 75252 Paris Cedex 05, France.}
\email{$^{1}${levy@ljll.math.upmc.fr}}
\begin{document}

\begin{abstract}
 In this paper, we extend our previous result from \cite{NoteAuCRAS}.
 We prove that transport equations with rough coefficients do possess a uniqueness property, even in the presence of viscosity.
 Our method relies strongly on duality and bears a strong resemblance with the well-known DiPerna-Lions theory first developed in \cite{DiPerna-Lions}.
 This uniqueness result allows us to reprove the celebrated theorem of J. Serrin \cite{Serrin} in a novel way.
 As a byproduct of the techniques, we derive an $L^1$ bound for the vorticity in terms of a critical Lebesgue norm of the velocity field.
 We also show that the zero solution is unique for the 2D Euler equations on the torus under a mild integrability assumption.
 \textbf{TODO : chercher diverses \'equations classiques o\`u les id\'ees d'unicit\'e s'appliquent}
\end{abstract}

\maketitle

\section{Introduction}
In their seminal paper \cite{DiPerna-Lions}, R. J. DiPerna and P.-L. Lions proved the existence and uniqueness of solutions to transport equations on $\mathbb{R}^d$.
We recall here a slightly simplified version of their statement. 
\begin{theorem}[DiPerna-Lions]
 Let $d \geq 1$ be an integer.
 Let $1 \leq p \leq \infty$ and $p'$ its H\"older conjugate.
 Let $a_0$ be in $L^p(\mathbb{R}^d)$.
 Let $v$ be a fixed divergence free vector field in $L^1_{loc}(\mathbb{R}_+, \dot{W}^{1,p'}(\mathbb{R}^d))$.
 Then there exists a unique distributional solution $a$ in $L^{\infty}(\mathbb{R}_+,L^p(\mathbb{R}^d))$ of the Cauchy problem
 \begin{equation}
    \left \{
\begin{array}{c  c}
    \partial_t a + \nabla \cdot  (a v) = 0 \\
    a (0)  = a_0,  \\
\end{array}
\right.
 \end{equation}
 with the initial condition understood in the sense of $\mathcal{C}^0(\mathbb{R}_+, \mathcal{D}'(\mathbb{R}^d))$.
 We recall that $a$ is a distributional solution of the aforementioned Cauchy problem if and only if, 
 for any $\varphi$ belonging to $\mathcal{D}(\mathbb{R}_+ \times \mathbb{R}^d)$ and any $T > 0$, there holds
 \begin{equation}
  \int_0^T \int_{\mathbb{R}^d} a(t,x) \left(\partial_t \varphi(t,x) + v(t,x) \cdot \nabla \varphi(t,x) \right) dx dt 
  = \int_{\mathbb{R}^d} a(T,x) \varphi(T,x) dx - \int_{\mathbb{R}^d} a_0(x) \varphi(0,x) dx.
 \end{equation}

\end{theorem}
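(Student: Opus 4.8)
The plan is to establish existence and uniqueness separately, obtaining existence by a regularization argument and uniqueness through the DiPerna--Lions renormalization technique. Throughout I would first record that, since $v$ is divergence free, the continuity equation $\partial_t a + \nabla \cdot (av) = 0$ coincides in the sense of distributions with the transport equation $\partial_t a + v \cdot \nabla a = 0$; this is the form best suited to the a priori estimates.

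For existence, I would mollify $v$ in the space variable to obtain a smooth divergence-free field $v_\epsilon = v * \rho_\epsilon$ and solve the regularized problem by the method of characteristics. The associated flow $X_\epsilon(t,\cdot)$ is measure preserving because $\nabla \cdot v_\epsilon = 0$, so that the explicit solution $a_\epsilon(t,x) = a_0\big(X_\epsilon(t,\cdot)^{-1}(x)\big)$ satisfies $\|a_\epsilon(t)\|_{L^p} = \|a_0\|_{L^p}$ for all $t$. This uniform bound yields weak-$*$ compactness in $L^\infty(\mathbb{R}_+, L^p)$, and passing to the limit in the weak formulation produces a solution $a$; the only delicate point is the convergence of the product $a_\epsilon v_\epsilon$, handled by a weak-strong argument since the test function is smooth, $a_\epsilon \rightharpoonup a$ weakly-$*$, and $v_\epsilon \to v$ strongly locally.

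For uniqueness, by linearity it suffices to show that a solution with $a_0 = 0$ vanishes identically. The heart of the matter is the renormalization property. I would mollify a given solution, $a_\epsilon = a * \rho_\epsilon$, and write
\begin{equation}
\partial_t a_\epsilon + v \cdot \nabla a_\epsilon = r_\epsilon, \qquad r_\epsilon := v \cdot \nabla a_\epsilon - \big( v \cdot \nabla a \big) * \rho_\epsilon.
\end{equation}
The commutator lemma of DiPerna--Lions asserts that $r_\epsilon \to 0$ strongly in $L^1_{loc}$ as $\epsilon \to 0$, its proof resting on the difference-quotient characterization of $\dot{W}^{1,p'}$ together with a uniform bound of the shape $\|r_\epsilon\|_{L^1} \lesssim \|\nabla v\|_{L^{p'}} \|a\|_{L^p}$, followed by a density argument and dominated convergence. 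Since $a_\epsilon$ is smooth in space, the classical chain rule gives, for any $\beta \in C^1$,
\begin{equation}
\partial_t \beta(a_\epsilon) + v \cdot \nabla \beta(a_\epsilon) = \beta'(a_\epsilon) r_\epsilon,
\end{equation}
and letting $\epsilon \to 0$, using the uniform bounds and $r_\epsilon \to 0$, shows that $\beta(a)$ again solves the transport equation.

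Choosing $\beta(s) = |s|^p$ when $p < \infty$ (and a truncation argument when $p = \infty$), integrating in space and exploiting $\nabla \cdot v = 0$ to annihilate the transport term gives $\frac{d}{dt}\|a(t)\|_{L^p}^p = 0$, so that $\|a(t)\|_{L^p} = \|a_0\|_{L^p}$ for all $t$; applied to a solution emanating from $a_0 = 0$ this forces $a \equiv 0$ and settles uniqueness. The main obstacle is unquestionably the commutator estimate: the control of $r_\epsilon$ demands precisely the matching integrability $v \in \dot{W}^{1,p'}$ and $a \in L^p$, and it is the \emph{strong} (not merely weak) convergence to zero that makes the renormalization step rigorous rather than merely formal.
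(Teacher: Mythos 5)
The paper does not actually prove this statement: it is recalled, in ``slightly simplified'' form, from DiPerna--Lions \cite{DiPerna-Lions} as background, so there is no internal proof to compare against. Your sketch follows the classical renormalization route of the original paper, whereas the machinery developed in this paper for the analogous results (Theorem \ref{Uniqueness} and its relatives) is a duality argument: one builds a \emph{bounded} solution of the adjoint backward problem (Theorem \ref{Existence}), pairs it with the mollified solution, and lets the commutator of Lemma \ref{LemmeCommutateur} vanish in $L^1$. The two routes share the commutator lemma --- your $r_\epsilon = v\cdot\nabla a_\epsilon - (v\cdot\nabla a)\ast\rho_\epsilon$ is, using $\nabla\cdot(va)=v\cdot\nabla a$, exactly the $C^{\varepsilon}$ of Lemma \ref{LemmeCommutateur} --- but they diverge afterwards: renormalization composes the rough solution $a$ with a nonlinearity $\beta$ and closes with an $L^p$ energy identity, while the duality argument never manipulates $a$ beyond mollification and instead exploits the maximum principle for the adjoint equation. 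The latter is precisely what lets the paper accommodate the viscosity $-\nu\Delta a$ and the non-diagonal right-hand side of $(C_{NS})$, for which no renormalization is available; for the pure transport statement at hand, either route works.

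Two points in your sketch need repair before it is a complete proof. First, you cannot take $\beta(s)=|s|^p$ directly: passing to the limit in $\beta'(a_\epsilon)r_\epsilon\to 0$ in $L^1_{loc}$ requires $\beta'$ bounded (that is what makes $r_\epsilon\to 0$ in $L^1$ sufficient), so one must renormalize with bounded truncations of $|s|^p$ and remove the truncation afterwards; the endpoint $p=1$ also needs separate care in the existence step, since $L^\infty(\mathbb{R}_+,L^1)$ has no weak-$\ast$ compactness and one must propagate equi-integrability through the measure-preserving flows. Second, the step ``integrating in space annihilates the transport term'' is not free: from $\partial_t\beta(a)+\nabla\cdot(v\,\beta(a))=0$ one tests against a cutoff $\phi_R$ and must show $\int \beta(a)\,v\cdot\nabla\phi_R\,dx\to 0$ as $R\to\infty$, which is exactly why the original DiPerna--Lions uniqueness theorem carries a growth hypothesis of the type $v/(1+|x|)\in L^1+L^\infty$ that the simplified statement here suppresses. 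With those caveats, your outline is the standard and essentially correct one, but it is not the argument this paper uses for its own theorems.
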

Beyond this theorem, many authors have since proved similar existence and (non-)uniqueness theorems, see for instance 
\cite{AmbrosioBV}, \cite{AmbrosioCrippa}, \cite{BouchutJames1}, \cite{BouchutJames2}, \cite{BouchutJames3}, \cite{Depauw}, \cite{LeBrisLions}, \cite{LeFlochXin}, 
\cite{Lerner} and references therein.
In particular, the papers \cite{BouchutJames1}, \cite{BouchutJames2} and \cite{BouchutJames3} use a duality method which is close in spirit to our results. 
Our key result, which relies on the maximum principle for the \emph{adjoint} equation, is both more general and more restrictive than the DiPerna-Lions theorem.
The generality comes from the wider range of exponents allowed, along with the affordability of additional scaling-invariant and/or dissipative terms in the equation.
We thus extend the result from \cite{NoteAuCRAS}, where the setting was restricted to the $L^2_{t,x}$ case and no right-hand side was considered. 
On the other hand, we do not fully extend the original theorem, since we are unable to prove the existence of solutions in the uniqueness classes.
Here is the statement.
\begin{theorem}
 Let $d \geq 1$ be an integer.
 Let $\nu \geq 0$ be a positive parameter.
 Let $1 \leq p,q \leq \infty$ be real numbers with Hölder conjugates $p'$ and $q'$.
 Let $v = v(t,x)$ be a fixed, divergence free vector field in $L^{p'}(\mathbb{R}_+, \dot{W}^{1,q'}(\mathbb{R}^d))$. 
 Given a time $T^* > 0$, let $a$ be in $L^p([0,T^*], L^q(\mathbb{R}^d))$. 
 Assume that $a$ is a distributional solution of the Cauchy problem
 \begin{equation}  
  (C) \left \{
\begin{array}{c  c}
    \partial_t a + \nabla \cdot  (a v) - \nu \Delta a = 0 \\
    a (0)  = 0,  \\
\end{array}
\right.
 \end{equation}
with the initial condition understood in the sense of $\mathcal{C}^0([0,T^*], \mathcal{D}'(\mathbb{R}^d))$.
That is, we assume that, for any function $\varphi$ in $\mathcal{D}(\mathbb{R}_+ \times \mathbb{R}^d)$ and any $T > 0$, there holds
\begin{equation}
 \int_{\mathbb{R}_+ \times \mathbb{R}^d} a(t,x) \left(\partial_t \varphi(t,x) + v(t,x) \cdot \nabla \varphi(t,x) + \nu \Delta \varphi(t,x) \right) dx dt 
 = \int_{\mathbb{R}^d} u(T,x) \varphi(T,x) dx.
\end{equation}
Then $a$ is identically zero on $[0,T^*] \times \mathbb{R}^d$.
\label{Uniqueness}
\end{theorem}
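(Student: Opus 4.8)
The plan is to argue by duality, exploiting that $a$ solves a \emph{linear} equation with zero data: it suffices to prove that $\langle a(T),\psi\rangle = 0$ for every $T\in(0,T^*]$ and every $\psi\in\mathcal{D}(\mathbb{R}^d)$, since this forces $a(T)=0$ in $\mathcal{D}'$ for all $T$, hence $a\equiv 0$. The natural test function to feed into the weak formulation is the solution of the backward adjoint problem $\partial_t\varphi + v\cdot\nabla\varphi + \nu\Delta\varphi = 0$ with terminal datum $\varphi(T)=\psi$: if such a $\varphi$ were an admissible test function, the entire left-hand side of the weak formulation would vanish and we would read off $\langle a(T),\psi\rangle = 0$ at once, the $t=0$ boundary term being absent because $a(0)=0$. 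The difficulty is that $v$ is too rough for this adjoint equation to admit smooth solutions, so the argument must be run on a regularized problem and then passed to the limit.

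Concretely, I would fix a family of smooth, divergence-free fields $v_n\to v$ in $L^{p'}(\mathbb{R}_+,\dot{W}^{1,q'}(\mathbb{R}^d))$ (by mollification, which preserves $\nabla\cdot v=0$) and solve
\[
\partial_t\varphi_n + v_n\cdot\nabla\varphi_n + \nu\Delta\varphi_n = 0,\qquad \varphi_n(T)=\psi.
\]
After the time reversal $s=T-t$ this is a forward transport--diffusion equation with smooth coefficients, hence well-posed and smooth for every $\nu\ge 0$. This is where the maximum principle for the adjoint equation enters: since $v_n$ is divergence free, the transport term is skew-adjoint in every $L^r$ and the term $\nu\Delta$ has a favourable sign, so testing against $|\varphi_n|^{r-2}\varphi_n$ for $1\le r<\infty$ (and letting $r\to\infty$) yields the uniform bounds $\|\varphi_n(t)\|_{L^r}\le\|\psi\|_{L^r}$ for all $r\in[1,\infty]$ and all $t\le T$, together with, when $\nu>0$, an extra $\nu\|\nabla\varphi_n\|_{L^2_{t,x}}^2$ worth of control from the dissipation.

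Inserting $\varphi_n$ into the weak formulation (after truncating it in space and time to make it admissible, the truncation errors being absorbed by the uniform $L^r$ bounds) and using the equation solved by $\varphi_n$ to cancel the principal terms leaves only the commutator generated by the mismatch $v-v_n$:
\[
\int_0^T\!\!\int_{\mathbb{R}^d} a\,(v-v_n)\cdot\nabla\varphi_n\,dx\,dt = \langle a(T),\psi\rangle.
\]
It remains to show the left-hand side tends to $0$. I would bound it by Hölder as $\|a\|_{L^p_tL^q_x}\,\|(v-v_n)\cdot\nabla\varphi_n\|_{L^{p'}_tL^{q'}_x}$ and control the second factor by peeling off $v-v_n$ in the Sobolev-critical space: since $\nabla(v-v_n)\to 0$ in $L^{p'}_tL^{q'}_x$, the embedding $\dot{W}^{1,q'}\hookrightarrow L^{(q')^*}$ gives $v-v_n\to0$ there, and pairing against $\nabla\varphi_n$ in the conjugate space $L^\infty_tL^d_x$ balances the exponents. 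The divergence-free structure is used once more to rewrite $(v-v_n)\cdot\nabla\varphi_n=\nabla\cdot((v-v_n)\varphi_n)$ wherever an integration by parts is convenient.

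The main obstacle is precisely this last step: securing a gradient bound on the dual solutions, uniform in $n$, in the scaling-critical norm $L^\infty_tL^d_x$ (or whichever conjugate exponent the Hölder balance dictates). Differentiating the adjoint equation produces a source term $-(\nabla v_n)^\top\nabla\varphi_n$ in which $\nabla v_n$ lives only in $L^{q'}$, so a naive Grönwall estimate on $\|\nabla\varphi_n\|_{L^d}$ does not close; this is the familiar failure of critical transport estimates and is the heart of the matter. For $\nu>0$ I expect to exploit the parabolic smoothing and the dissipation identified above to absorb the bad contribution, whereas for $\nu=0$ the estimate must be drawn from the transport structure alone, along the lines of the DiPerna--Lions commutator lemma, which is exactly the tool tailored to the pairing $a\in L^q$, $\nabla v\in L^{q'}$. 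Once the commutator is shown to vanish, the identity above gives $a(T)=0$ for every $T$, and the theorem follows.
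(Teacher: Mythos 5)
Your overall strategy --- duality against a backward adjoint problem with mollified velocity, with the $L^\infty$ maximum principle for the adjoint equation as the key uniform bound --- is exactly the paper's strategy. But your execution has a genuine gap, and it is precisely the one you flag yourself in the last paragraph: after cancelling the principal terms you are left with
$$\int_0^T\!\!\int_{\mathbb{R}^d} a\,(v-v_n)\cdot\nabla\varphi_n\,dx\,dt,$$
and to kill it you need a bound on $\nabla\varphi_n$, uniform in $n$, in a scaling-critical conjugate space. No such bound is available: differentiating the adjoint equation produces the source $(\nabla v_n)^\top\nabla\varphi_n$ with $\nabla v_n$ only in $L^{q'}$, the Gr\"onwall estimate does not close, and the parabolic smoothing from $\nu\Delta$ does not rescue it either (the dissipation controls $\nabla\varphi_n$ only in $L^2_{t,x}$ weighted by powers of $\varphi_n$, not in $L^\infty_t L^d_x$). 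Invoking the DiPerna--Lions commutator lemma ``for $\nu=0$'' does not repair this, because in your setup the commutator never actually appears: the lemma concerns the mismatch between $v\cdot\nabla(\rho_\varepsilon\ast a)$ and $\rho_\varepsilon\ast\nabla\cdot(av)$, i.e.\ it requires mollifying $a$, which you never do.

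The missing idea is a \emph{second} regularization, of the solution $a$ itself. The paper sets $a_\varepsilon=\rho_\varepsilon\ast a$, so that $a_\varepsilon$ solves the original equation up to the DiPerna--Lions commutator $C^\varepsilon$, and then pairs $a_\varepsilon$ with the dual solution $\varphi^\delta$. This moves the derivative in the error term onto $a_\varepsilon$ rather than onto $\varphi^\delta$: the velocity-mismatch term becomes $\int\varphi^\delta\,(v_\delta-v)\cdot\nabla a_\varepsilon$, which is controlled by $\|\varphi^\delta\|_{L^\infty}\,\|v_\delta-v\|_{L^{p'}_tL^{q'}_x}\,\|\nabla a_\varepsilon\|_{L^p_tL^q_x}$ and vanishes as $\delta\to0$ for each \emph{fixed} $\varepsilon$, using only the maximum principle --- no gradient bound on the dual solution is ever needed. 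The price is the extra term $\int\varphi^\delta C^\varepsilon$, which is then sent to zero in the second limit $\varepsilon\to0$ by the commutator lemma ($\|C^\varepsilon\|_{L^1_{t,x}}\to0$) against the $L^\infty$ bound on $\varphi$. The order of limits ($\delta\to0$ first, then $\varepsilon\to0$) is essential. Without this double regularization your argument stalls exactly where you say it does.
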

Though one may fear that the lack of existence might render the theorem unapplicable in practice, it does not.
For instance, when working with the Navier-Stokes equations, the vorticity of a Leray solution only belongs, a priori, to 
$$L^{\infty}(\mathbb{R}_+, \dot{H}^{-1}(\mathbb{R}^d)) \cap L^2(\mathbb{R}_+ \times \mathbb{R}^d).$$
In particular, the only Lebesgue-type space to which this vorticity belongs is $L^2(\mathbb{R}_+ \times \mathbb{R}^d)$.
Our theorem is well suited for solutions possessing \emph{a priori} no integrable derivative whatsoever.

As such, our theorem appears a regularization tool.
The philosophy is that, if an equation has smooth solutions, then any sufficiently integrable \emph{weak} solution is automatically smooth.
We illustrate our theorem with an application to the regularity result of J. Serrin \cite{Serrin} and subsequent authors 
\cite{BeiraoDaVeiga}, \cite{CaffKohnNiren}, \cite{CheminZhang}, \cite{FabesJonesRiviere}, \cite{FabreLebeau}, \cite{Giga}, \cite{IskauSereginSverak}, \cite{Struwe}, \cite{vonWahl}.

 We warn the reader that we did \emph{not} prove that the Leray solutions are unique in their class and will not claim so.
 Indeed, the uniqueness stated in Theorem \ref{UniquenessSerrin} is purely linear.
 In particular, it does not use the link between the vorticity and the exterior fields.
 It does not rely either on the divergence freeness of the vorticity.
 The key point in our proof is the maximum principle of the \emph{adjoint equation}.
 The validity of the maximum principle partially depends on the vorticity equation having only differential operators rather than pseudodifferential ones.
 
 Another standpoint on this theorem, which we owe to a private communication from N. Masmoudi, is that we now have two ways to recover the vorticity field $\Omega$ from the velocity.
 We may either we use the defining identity 
 $$\Omega := \nabla \wedge u$$ 
 or that $\Omega$ is the unique solution of the linear problem
 $$ 
  (NSV) \left \{
\begin{array}{c  c}
    \partial_t \Omega + \nabla \cdot  (\Omega \otimes u) - \Delta \Omega = \nabla \cdot ( u \otimes \Omega)\\
    \Omega (0)  = \nabla \wedge u_0.  \\
\end{array}
\right.
$$
 The second choice makes a strong use of the peculiar algebra of the Navier-Stokes equations, while the first one is general and requires no other assumption on $u$ than the divergence-free condition.
 Thus, we may hope to garner more information from the vorticity uniqueness, even though it may seem circuitous.
 Embodied by Theorem \ref{ExistenceSerrin} is our new approach to the Serrin-type regularity results, relying on finer algebraic properties of the equation than
 its belonging to the semilinear heat equations family.
 
 \section{Results}
 Let us comment a bit on the strategy we shall use. 
First, because $a$ lies in a low-regularity class of distributions, energy-type estimates seem out of reach.
Thus, a duality argument is much more adapted to our situation.
Given the assumptions on $a$, which for instance imply that $\Delta a$ is in $L^p(\mathbb{R}_+, \dot{W}^{-2, q}(\mathbb{R}^d))$, we need to prove the following existence result.
\begin{theorem}
 Let $\nu \geq 0$ be a positive real number.
 Let $v = v(t,x)$ be a fixed, divergence free vector field in $L^{p'}(\mathbb{R}_+, \dot{W}^{1,q'}(\mathbb{R}^d))$. 
 Let $\varphi_0$ be a smooth, compactly supported function in $\mathbb{R}^d$.
 There exists a function $\varphi$ in $L^{\infty}(\mathbb{R}_+ \times \mathbb{R}^d)$ solving
 \begin{equation}
  (C') \left \{
\begin{array}{c  c}
    \partial_t \varphi -   \nabla \cdot ( \varphi v) - \nu \Delta \varphi = 0 \\
    \varphi (0)  = \varphi_0  \\
\end{array}
\right.
 \end{equation}
 in the sense of distributions and satisfying the estimate
 $$\|\varphi(t)\|_{L^{\infty}(\mathbb{R}^d)} \leq \|\varphi_0\|_{L^{\infty}(\mathbb{R}^d)}. $$
\label{Existence}
\end{theorem}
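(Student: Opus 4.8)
The plan is to construct $\varphi$ as a weak-$*$ limit of solutions to regularized problems, using throughout the divergence-free condition, which strips the transport term of any zeroth-order part: since $\nabla\cdot v=0$ one has $\nabla\cdot(\varphi v)=v\cdot\nabla\varphi$, so $(C')$ reads $\partial_t\varphi-v\cdot\nabla\varphi-\nu\Delta\varphi=0$. This is exactly the structure for which a maximum principle is available. First I would mollify $v$ in the space variable (and, if convenient, in time), setting $v_\varepsilon=v*\rho_\varepsilon$; this keeps $v_\varepsilon$ divergence free and smooth in $x$, and yields $v_\varepsilon\to v$ strongly in $L^{p'}(\mathbb{R}_+,\dot W^{1,q'})$, hence locally in $L^{p'}_tL^{q'}_x$.

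For each $\varepsilon$ the regularized Cauchy problem with smooth coefficients admits a smooth solution $\varphi_\varepsilon$: when $\nu>0$ by standard linear parabolic theory with a spatially smooth, time-integrable drift, and when $\nu=0$ by the method of characteristics, $\varphi_\varepsilon(t,\cdot)=\varphi_0\circ X_\varepsilon(t,\cdot)^{-1}$ along the flow of $-v_\varepsilon$. The crucial output is the uniform estimate $\|\varphi_\varepsilon(t)\|_{L^\infty}\leq\|\varphi_0\|_{L^\infty}$, independent of $\varepsilon$ and $\nu$. For $\nu=0$ this is the invariance of the sup-norm along characteristics; for $\nu>0$ it is the weak parabolic maximum principle, which applies precisely because the absence of a zeroth-order term forbids growth. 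Armed with this bound, I would invoke the weak-$*$ compactness of bounded sets in $L^\infty(\mathbb{R}_+\times\mathbb{R}^d)=(L^1)^*$ to extract a subsequence $\varphi_\varepsilon\rightharpoonup^*\varphi$, the limit inheriting $\|\varphi(t)\|_{L^\infty}\leq\|\varphi_0\|_{L^\infty}$.

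It then remains to pass to the limit in the distributional formulation tested against $\psi\in\mathcal D(\mathbb{R}_+\times\mathbb{R}^d)$. The terms $\int\varphi_\varepsilon\,\partial_t\psi$ and $\nu\int\varphi_\varepsilon\,\Delta\psi$ converge at once, since $\partial_t\psi,\Delta\psi\in L^1$ have compact support and $\varphi_\varepsilon\rightharpoonup^*\varphi$, and the initial datum passes as well. The only delicate term is the transport contribution $\int\varphi_\varepsilon\,v_\varepsilon\cdot\nabla\psi$, where $\varphi_\varepsilon$ converges only weakly-$*$. I would handle it through the splitting
$$\varphi_\varepsilon v_\varepsilon-\varphi v=\varphi_\varepsilon(v_\varepsilon-v)+(\varphi_\varepsilon-\varphi)v.$$
For the first summand, $\|\varphi_\varepsilon\|_{L^\infty}$ is uniformly bounded while $(v_\varepsilon-v)\nabla\psi\to0$ in $L^1(\mathrm{supp}\,\psi)$ by Hölder and the strong convergence of $v_\varepsilon$, invoking $\dot W^{1,q'}\hookrightarrow L^{q'}_{loc}$ through Sobolev embedding to secure local integrability of $v$. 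For the second, $v\cdot\nabla\psi\in L^1$ is compactly supported and $\varphi_\varepsilon-\varphi\rightharpoonup^*0$, so its pairing vanishes.

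I expect this product passage to be the main obstacle: reconciling the merely weak-$*$ convergence of $\varphi_\varepsilon$ with the roughness of $v$ is where the argument is genuinely constrained, and its resolution hinges on the strong convergence of the mollified field together with the uniform sup-bound inherited from the maximum principle. A secondary technical point is ensuring the regularized solutions are regular enough to legitimately satisfy the maximum principle under coefficients that are only $L^{p'}$ in time; mollifying in time as well, and removing that regularization last, circumvents this at no cost since the $L^\infty$ bound is uniform in all parameters.
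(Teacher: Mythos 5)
Your proposal follows the same strategy as the paper's proof: mollify $v$ in space, solve the regularized problem, obtain an $L^{\infty}$ bound uniform in the regularization parameter, extract a weak-$*$ limit, and pass to the limit in the product term using the uniform sup bound together with the strong $L^1_{loc}$ convergence of the mollified field. The one genuine difference is how the uniform bound is obtained: you appeal to the classical parabolic maximum principle (or to characteristics when $\nu=0$), whereas the paper multiplies the equation by $\varphi^{\delta}|\varphi^{\delta}|^{r-2}$, derives conservation of the $L^r$ norm for every $r\geq 2$, and lets $r\to\infty$ --- a route that avoids any discussion of the maximum principle on an unbounded domain and, more importantly, is the one the author can still run for the vector-valued adjoint problem $(C'_{NS})$, where no pointwise maximum principle is available. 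Both arguments are correct for the scalar statement at hand.
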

Picking some positive time $T > 0$ and considering $\varphi(T - \cdot)$ instead of $\varphi$, 
Theorem \ref{Existence} amounts to build, for $T > 0$, a solution on $[0,T] \times \mathbb{R}^d$ of the Cauchy problem
\begin{equation}
  (-C') \left \{
\begin{array}{c  c}
    - \partial_t \varphi -   \nabla\cdot( \varphi v) - \nu\Delta \varphi = 0 \\
    \varphi (T)  = \varphi_0 . \\
\end{array}
\right.
 \end{equation}
 This theorem is a slight generalization of the analogue theorem in the Note \cite{NoteAuCRAS}.
 The proof we provide here follows the same lines but retains only the key estimate, which is the boundedness of the solution.
 The additional estimate in the Note was inessential and had the inconvenient to degenerate when the viscosity coefficient is small.
 In contrast, the boundedness is unaffected by such changes. 
 The techniques used in the proof of Theorem \ref{Existence} are robust.
 This robustness is encouraging for future work, as many generalizations are possible depending on the needs.
 We will not try to list them all ; instead, we give some examples of possible adaptations to other contexts.
 The most direct one is its analogue for diagonal systems, for uniqueness in this case reduces to applying the scalar case to each component of the solution.
 Alternatively, one may add various linear, scaling invariant terms on the right hand side, or any dissipative term (such as a fractional laplacian) on the left hand side.
 Also, in view of application to compressible fluid mechanics, the main theorems remain true without the divergence freeness of the transport field
 provided that the negative part of its divergence belongs to $L^1(\mathbb{R}_+, L^{\infty}(\mathbb{R}^d))$.
 This extension was already present in the original paper \cite{DiPerna-Lions} from R.J. DiPerna and P.-L. Lions.
 
 Among these numerous variants, a particular one stands out.
 It applies to a restricted family of equations, which are essentially the Navier-Stokes equations with frozen coefficients.
 These equations are obtained from $(C)$ by adding a linear, non diagonal term on the right-hand side, of a peculiar form.
 The purpose of this variant is to provide a different proof of the renowned Serrin theorem.
 We now state it.
 \begin{theorem}
 Let $d \geq 3$ be an integer.
 Let $\nu > 0$ be a positive real number.
 Let $2 \leq p < \infty$ and $d < q \leq \infty$ be real numbers satisfying $\frac 2p + \frac dq = 1$.
 Let $v$ be a fixed divergence free vector field in $L^2(\mathbb{R}_+, \dot{H}^1(\mathbb{R}^d))$.
 Let $w$ be a fixed vector field in $L^2(\mathbb{R}_+, \dot{H}^1(\mathbb{R}^d)) \cap L^p(\mathbb{R}_+, L^q(\mathbb{R}^d))$.
 Let $a$ be in $L^2(\mathbb{R}_+ \times \mathbb{R}^d)$. 
 Assume that $a$ is a distributional solution of the Cauchy problem
 \begin{equation}  
  (C_{NS}) \left \{
\begin{array}{c  c}
    \partial_t a + \nabla \cdot  (a \otimes v) - \nu \Delta a =  \nabla \cdot (w \otimes a) \\
    a (0)  = 0,  \\
\end{array}
\right.
 \end{equation}
with the initial condition understood in the sense of $\mathcal{C}^0([0,T], \mathcal{D}'(\mathbb{R}^d))$.
Then $a$ is identically zero on $\mathbb{R}_+ \times \mathbb{R}^d$.
\label{UniquenessSerrin}
\end{theorem}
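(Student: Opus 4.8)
The plan is to reproduce the duality scheme behind Theorem \ref{Uniqueness}, now for the \emph{system} obtained by adding the vortex-stretching-type term $\nabla\cdot(w\otimes a)$, and to deduce uniqueness from a bounded solution of the corresponding \emph{adjoint} problem. Writing $a=(a^m)_m$ and using $\nabla\cdot v=0$, the equation $(C_{NS})$ reads componentwise $\partial_t a^m + v\cdot\nabla a^m - \nu\Delta a^m - \sum_k\partial_k(w^m a^k)=0$, and a formal integration by parts identifies the adjoint operator. I would therefore first reduce the theorem to the following variant of Theorem \ref{Existence}: for every smooth compactly supported $\varphi_0$ and every $T>0$, the backward system
\[
\partial_t\varphi^m + v\cdot\nabla\varphi^m + \nu\Delta\varphi^m - \sum_{j}w^j\partial_m\varphi^j = 0,\qquad \varphi(T)=\varphi_0,
\]
admits a solution $\varphi\in L^\infty([0,T]\times\mathbb{R}^d)$ with $\|\varphi(t)\|_{L^\infty}\le\|\varphi_0\|_{L^\infty}$. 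The only genuinely new feature compared with Theorem \ref{Existence} is the coupling term $\sum_j w^j\partial_m\varphi^j$, which is the transpose of the stretching term and is scaling-critical.

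Next, reversing time and mollifying $v$ and $w$, I would build smooth approximate solutions and prove a uniform bound by an $L^r$ estimate followed by $r\to\infty$. Testing the time-reversed equation against $|\varphi^m|^{r-2}\varphi^m$ and summing over $m$, the diffusion produces $-c\nu\,\|\nabla g\|_{L^2}^2$ with $g:=|\varphi|^{r/2}$ and a constant $c$ bounded below uniformly in $r\ge2$, while the divergence-free transport term drops out. The coupling term is controlled by
\[
\left|\int \sum_{m,j} |\varphi^m|^{r-2}\varphi^m\, w^j\,\partial_m\varphi^j\right| \lesssim \frac{1}{r}\int |w|\, g\,|\nabla g| \lesssim \frac{1}{r}\,\|w\|_{L^q}\,\|g\|_{L^2}^{1-\theta}\,\|\nabla g\|_{L^2}^{1+\theta},\qquad \theta:=\frac{d}{q},
\]
via H\"older together with $\dot H^1\hookrightarrow L^{2^*}$ and interpolation. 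Here the hypotheses are used sharply: $q>d$ forces $\theta<1$, so $\|\nabla g\|^{1+\theta}$ is absorbed by the diffusion through Young's inequality, and the resulting time weight is $\|w\|_{L^q}^{2/(1-\theta)}=\|w\|_{L^q}^{p}$ precisely because $\frac2p+\frac dq=1$. Gr\"onwall then gives $\|\varphi(t)\|_{L^r}^r\le\|\varphi_0\|_{L^r}^r\exp\!\big(C_\nu\int_0^t\|w\|_{L^q}^p\big)$ with $C_\nu$ independent of $r$; taking the $r$-th root and letting $r\to\infty$ sends the exponential factor to $1$ and yields the maximum principle. This is where $\nu>0$ is indispensable, since $C_\nu$ degenerates as $\nu\to0$.

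Finally, I would run the duality exactly as in Theorem \ref{Uniqueness}. Fixing $T>0$ and letting $\varphi_n$ be the smooth approximate adjoint solutions built from $v_n,w_n$, pairing $a\in L^2(\mathbb{R}_+\times\mathbb{R}^d)$ against $\varphi_n$ and using that $a$ solves $(C_{NS})$ with $a(0)=0$ leaves, besides the boundary contribution $\int_{\mathbb{R}^d} a(T)\cdot\varphi_0$, only a defect of the form
\[
\int_0^T\!\!\int_{\mathbb{R}^d} a\cdot\big((v-v_n)\cdot\nabla\varphi_n\big)\,dx\,dt
\]
together with an analogous term carrying the coupling in $w-w_n$. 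The uniform $L^\infty$ bound, the $L^2_{t,x}$ control on $\nabla\varphi_n$ from the $r=2$ estimate, the convergences $v_n\to v$ in $L^2_t\dot H^1_x$ and $w_n\to w$ in $L^p_tL^q_x\cap L^2_t\dot H^1_x$, and $a\in L^2_{t,x}$ together force this defect to vanish as $n\to\infty$. One concludes $\int_{\mathbb{R}^d} a(T)\cdot\varphi_0=0$ for every smooth compactly supported $\varphi_0$, hence $a(T)=0$ for almost every $T$, i.e. $a\equiv0$. I expect the main difficulty to be the construction step, namely the uniform $L^\infty$ bound for the adjoint system in the presence of the critical coupling term: once the Serrin numerology is arranged as above it drives the whole proof, whereas the vanishing of the duality defect is a more routine commutator-type argument.
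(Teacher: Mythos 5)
Your overall architecture is the same as the paper's: build a bounded solution of the backward adjoint system by mollifying the coefficients, proving an $L^r$ estimate uniform in the mollification and letting $r\to\infty$, then run a duality/defect argument against $a\in L^2_{t,x}$. However, there is a genuine error in the central estimate. You claim that the coupling term satisfies
\[
\Bigl|\int \sum_{m,j}|\varphi^m|^{r-2}\varphi^m\,w^j\,\partial_m\varphi^j\Bigr|\;\lesssim\;\frac1r\int |w|\,g\,|\nabla g|,\qquad g=|\varphi|^{r/2},
\]
and consequently that the Gr\"onwall exponent for $\|\varphi\|_{L^r}^r$ is $O(1)$ in $r$, so that the $r$-th root kills the exponential and yields the clean bound $\|\varphi(t)\|_{L^\infty}\le\|\varphi_0\|_{L^\infty}$. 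This inequality is false: because the indices are crossed ($\partial_m$ hits $\varphi^j$, not $\varphi^m$), the term $\sum_m\varphi^m\partial_m\varphi^j$ is \emph{not} a derivative of a power of $\varphi$, so no factor $1/r$ can be extracted. Pointwise one only has the bound $|\varphi|^{r-1}|w||\nabla\varphi|=\bigl(|\varphi|^{\frac{r-2}{2}}|\nabla\varphi|\bigr)\,g\,|w|$, involving the \emph{full} gradient $|\nabla\varphi|$ and not merely $|\nabla|\varphi||\sim \tfrac{2}{r}g^{-1}|\nabla g|\cdot g$. The part of the dissipation that controls $|\varphi|^{\frac{r-2}{2}}|\nabla\varphi|$ in $L^2$ carries the coefficient $\nu$, with no growth in $r$ (the $r$-growing part of the dissipation only controls $\nabla|\varphi|^2$, i.e.\ $\nabla g$); this is precisely the point the paper flags when it notes that ``the smoothing effect concentrates mostly on $|a|^2$ and not on the full solution''. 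Carrying the estimate out correctly (H\"older, Sobolev $\dot H^{1-2/p}\hookrightarrow L^{\tilde q}$, interpolation, three-term Young) leaves a Gr\"onwall term $\frac{C^p}{p\nu^{p-2}}\|w\|_{L^q}^p\|\varphi\|_{L^r}^r$ \emph{without} any $1/r$ gain, so after the $r$-th root the exponential survives and the correct conclusion is only the generalized maximum principle
\[
\|\varphi(t)\|_{L^{\infty}}\le\|\varphi_0\|_{L^{\infty}}\exp\Bigl[\tfrac{C^p}{p\nu^{p-2}}\int_0^t\|w(s)\|_{L^q}^p\,ds\Bigr].
\]
This weaker, growing bound is still finite since $w\in L^p_tL^q_x$, and it is all the duality step needs, so your proof of the uniqueness theorem can be repaired without changing its structure; but as stated your construction step proves something false. (If the clean maximum principle were true, the scale-invariant hypothesis on $w$ could essentially be dropped, which the paper argues is not expected.)

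Two secondary points. First, testing componentwise against $|\varphi^m|^{r-2}\varphi^m$ makes even the lower bound on the dissipation by $c\nu\|\nabla g\|_{L^2}^2$ with $c$ uniform in $r$ delicate; the paper avoids this by first deriving the equation on $|\varphi|^2$ and then multiplying by $|\varphi|^{r-2}$, which cleanly separates the $O(\nu)$ full-gradient dissipation from the $O(r\nu)$ modulus-gradient dissipation. Second, in the duality step the paper also mollifies $a$ in space and invokes a DiPerna--Lions commutator lemma (with an extra commutator $D^{\varepsilon}$ for the $w$-term) to justify the integrations by parts against a merely $L^2_{t,x}$ solution; your sketch pairs $a$ directly against the approximate adjoint solutions and defers this to ``a routine commutator-type argument'', which is acceptable in outline but is where the remaining rigor must be supplied.
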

This time, the addition of a non diagonal -- though scaling invariant -- term induces some notable changes, because of two algebraic facts which we wish to emphasize.
The first one relates to the divergence freeness of the solution when dealing with the Navier-Stokes equations.
Indeed, we have in this case the equality
$$\nabla \cdot (w \otimes \Omega) = \Omega \cdot \nabla w $$
 and both sides make sense as distributions.
 However, since we forget the divergence freeness of $a$ when we compute the adjoint equation, 
 it is of utmost importance to write the equation with the right-hand side written in its divergence form $\nabla \cdot (w \otimes a)$.
 This divergence form is the only one with which we are able to get a essential bound (or generalized maximum principle) for the adjoint equation, an absolutely crucial feature of our proof.
 The second one stems from the vectorial nature of the solution $a$, which complexifies the integration by parts of the term $|a|^{r-2}a \Delta a$.
 As it is well-known, adding a laplacian term in a partial differential equation has a smoothing effect on solutions.
 However, when $r$ grows, the smoothing effect concentrates mostly on $|a|^2$ and not on the full solution $a$.
 While this may look like a trivial observation to the accustomed reader, it is precisely what prevents us from removing the scale-invariant assumption that $w$ belongs to 
 $L^p(\mathbb{R}_+, L^q(\mathbb{R}^d))$.
 If we were able to lift it -- which we believe we cannot, owing to the numerical results of J. Guillod and V. V. \v{S}ver\'ak in \cite{GuillodSverak} --, 
 then a linear uniqueness statement for Leray solutions would hold.
 
 \begin{remark}
  Theorem \ref{UniquenessSerrin} also holds in the limit case $(p,q) = (d,\infty)$, provided that $w$ satisfies the smallness condition
  $$\|w\|_{L^{\infty}(\mathbb{R}_+, L^d(\mathbb{R}^d))} < \frac{2\nu}{C},$$
  where $C$ is the Sobolev constant associated to the embedding $\dot{H}^1(\mathbb{R}^d) \hookrightarrow L^{\frac{2d}{d-2}}(\mathbb{R}^d)$.
 \end{remark}

 To prove Theorem \ref{UniquenessSerrin}, we will need, as for Theorem \ref{Uniqueness}, a dual existence result, which we state.
 \begin{theorem}
 Let $d \geq 3$ be an integer.
 Let $\nu > 0$ be a positive real number.
 Let $2 \leq p < \infty$ and $d < q \leq \infty$ be real numbers satisfying $\frac 2p + \frac dq = 1$.
 Let $v$ be a fixed divergence free vector field in $L^2(\mathbb{R}_+, \dot{H}^1(\mathbb{R}^d))$.
 Let $w$ be a fixed vector field in $L^2(\mathbb{R}_+, \dot{H}^1(\mathbb{R}^d)) \cap L^p(\mathbb{R}_+, L^q(\mathbb{R}^d))$.
 There exists a solution $\varphi$ to the following Cauchy problem
 \begin{equation}  
  (C'_{NS}) \left \{
\begin{array}{c  c}
    \partial_t \varphi - \nabla \cdot  (\varphi \otimes v) - \nu \Delta \varphi =  - ^t \nabla \varphi \cdot a \\
    \varphi (0)  = \varphi_0 \in \mathcal{D}(\mathbb{R}^d)  \\
\end{array}
\right.
 \end{equation}
satisfying in addition, for almost every $t > 0$,
\begin{equation}
 \|\varphi(t)\|_{L^{\infty}(\mathbb{R}^d)} \leq \|\varphi_0\|_{L^{\infty}(\mathbb{R}^d)} \exp\left[\frac{C^p}{p\nu^{p-2}} \int_0^t \|w(s)\|_{L^q(\mathbb{R}^d)}^p ds \right]. 
 \label{GrowingMaximumPriciple}
\end{equation}
Above, $C$ denotes a constant depending only on the dimension $d$.
 \label{ExistenceSerrin} 
 \end{theorem}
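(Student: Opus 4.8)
The plan is to build a smooth solution of a regularised form of $(C'_{NS})$ that satisfies \eqref{GrowingMaximumPriciple} with constants independent of the regularisation, and then to recover $\varphi$ by weak-$*$ compactness, in the spirit of Theorem~\ref{Existence}. First I would mollify the data, replacing $v$ and $w$ by smooth, bounded fields $v_n,w_n$ (with $v_n$ divergence free and $\|w_n\|_{L^p(\mathbb{R}_+,L^q)}\le\|w\|_{L^p(\mathbb{R}_+,L^q)}$) obtained by spatial convolution and time truncation. For such coefficients the system $(C'_{NS})$ is linear and uniformly parabolic, so a classical solution $\varphi_n$ exists by the heat-semigroup/Duhamel scheme already used for Theorem~\ref{Existence}. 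The entire difficulty is then to prove the bound \eqref{GrowingMaximumPriciple} for $\varphi_n$ uniformly in $n$.

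The heart of the argument is a scalar reduction. Setting $\psi:=|\varphi_n|^2$ and dotting the equation with $\varphi_n$, the divergence-free condition on $v_n$ lets me write the transport term conservatively and yields
\[
\partial_t\psi-\nabla\cdot(\psi v_n)-\nu\Delta\psi=-2\nu|\nabla\varphi_n|^2-2\,w_n\cdot\big((\varphi_n\cdot\nabla)\varphi_n\big).
\]
The coupling term is bounded pointwise by $2|w_n|\,|\varphi_n|\,|\nabla\varphi_n|$, and completing the square against the full dissipation $-2\nu|\nabla\varphi_n|^2$ produced by the Laplacian absorbs the gradient exactly, leaving the \emph{scalar} inequality
\[
\partial_t\psi-\nabla\cdot(\psi v_n)-\nu\Delta\psi\le\frac{|w_n|^2}{2\nu}\,\psi,\qquad\psi\ge0.
\]
This step is exactly where the vectorial nature of the unknown intervenes: the coupling ${}^{t}\nabla\varphi_n\cdot w_n$ sees the full Jacobian, so no componentwise maximum principle is available, and the price of the square completion is that the scalar potential $V:=|w_n|^2/(2\nu)$, rather than $w_n$ itself, must be controlled, which is why the scaling-invariant hypothesis $w\in L^p(\mathbb{R}_+,L^q)$ cannot be dropped.

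It remains to propagate the $L^\infty$ bound through this scalar inequality. The potential lies in $L^{p/2}(\mathbb{R}_+,L^{q/2})$, which is critical for the heat flow since $\tfrac{2}{p/2}+\tfrac{d}{q/2}=2$. Writing $m(t):=\|\psi(t)\|_{L^\infty}$ and letting $P(t,s)$ be the advection–diffusion propagator of $\partial_t-\nu\Delta-v_n\cdot\nabla$, which is positivity preserving and, because $v_n$ is divergence free, enjoys the same ultracontractive bound $\|P(t,s)\|_{L^{q/2}\to L^\infty}\lesssim(\nu(t-s))^{-d/q}$ as the drift-free heat semigroup (the Nash argument is insensitive to a divergence-free drift), Duhamel's formula and the positivity of $P$ and $V$ give
\[
m(t)\le m(0)+\frac{C}{\nu}\int_0^t\big(\nu(t-s)\big)^{-d/q}\,\|w_n(s)\|_{L^q}^2\,m(s)\,ds.
\]
Since $d/q=1-\tfrac2p<1$ and $\|w_n\|_{L^q}^2\in L^{p/2}(\mathbb{R}_+)$, this is a borderline singular Gronwall inequality whose two exponents are matched precisely by the Serrin relation. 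Closing it returns a bound of the shape \eqref{GrowingMaximumPriciple}, the exponent of $\nu$ being pinned by the homogeneity $\tfrac2p+\tfrac dq=1$ to be $p-1$. The delicate point is this endpoint estimate: the critical balance forbids a one-shot Gronwall, so I would run a Khasminskii-type argument, splitting $[0,t]$ into finitely many intervals on which $\int\|w_n\|_{L^q}^p$ is small enough for a local contraction and multiplying the resulting factors; the impossibility of such a splitting in the limit $q=d$, $p=\infty$ (where $\|w_n\|_{L^q}^p$ carries no absolute continuity) is precisely what forces the smallness condition recorded in the Remark.

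Finally, the bound being uniform in $n$, the family $(\varphi_n)$ is bounded in $L^\infty(\mathbb{R}_+\times\mathbb{R}^d)$ and admits a weak-$*$ limit $\varphi$. Because the coupling term is in divergence form, I can integrate by parts in the distributional formulation so that no derivative falls on $\varphi_n$; pairing the weak-$*$ convergence of $\varphi_n$ with the strong convergence of $v_n,w_n,\nabla w_n$ in $L^2_{loc}$ then lets me pass to the limit in every (linear) term and identify $\varphi$ as a solution of $(C'_{NS})$ still satisfying \eqref{GrowingMaximumPriciple}.
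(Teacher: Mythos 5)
Your setup (mollify $v,w$, solve the regularised system, prove a uniform $L^\infty$ bound, pass to the weak-$*$ limit) and your scalar reduction to
\[
\partial_t\psi-\nabla\cdot(\psi v_n)-\nu\Delta\psi\le\frac{|w_n|^2}{2\nu}\,\psi,\qquad \psi=|\varphi_n|^2,
\]
are sound and coincide with the first steps of the paper's proof (which derives exactly this equation on $|\varphi^\delta|^2$ before estimating). The gap is in how you propagate the $L^\infty$ bound. Your Duhamel inequality
\[
m(t)\le m(0)+\frac{C}{\nu}\int_0^t\bigl(\nu(t-s)\bigr)^{-d/q}\,\|w_n(s)\|_{L^q}^2\,m(s)\,ds
\]
cannot be closed, even locally, in $L^\infty_t$: since $\tfrac dq=1-\tfrac2p$ and the H\"older conjugate of $p/2$ is $p/(p-2)$, the exponent $\tfrac dq\cdot\tfrac{p}{p-2}$ equals exactly $1$, so the kernel $(t-s)^{-d/q}$ paired against $\|w\|_{L^q}^2\in L^{p/2}_t$ sits precisely at the endpoint where Hardy--Littlewood--Sobolev fails to land in $L^\infty_t$. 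Concretely, taking $\|w(s)\|_{L^q}^p\sim(t_1-s)^{-1}|\log(t_1-s)|^{-\gamma}$ with $1<\gamma\le p/2$ gives $\int_I\|w\|_{L^q}^p\,ds<\infty$ (and arbitrarily small for $I$ short) while $\sup_{t\in I}\int_I(t-s)^{-d/q}\|w(s)\|_{L^q}^2\,ds=\infty$. Hence the Khasminskii splitting you invoke does not produce a local contraction: smallness of $\int_I\|w\|_{L^q}^p$ does not control the operator norm of your Duhamel map on $L^\infty(I;L^\infty)$, and no finite product of factors is obtained. This is not a technicality but the essential difficulty of the critical Serrin scaling.

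The reason the paper's argument survives where yours stalls is that you spent the \emph{entire} pointwise dissipation $2\nu|\nabla\varphi_n|^2$ on completing the square, and then treated $V\psi$ as a zeroth-order perturbation of the advection--diffusion propagator. The paper instead keeps the dissipation in integrated form: multiplying the $\psi$-equation by $|\varphi^\delta|^{r-2}$ produces \emph{two} dissipative terms, $\nu\int|\nabla\varphi^\delta|^2|\varphi^\delta|^{r-2}$ and $\nu\tfrac{r-2}{4}\int|(\psi)^{\frac{r-4}{4}}\nabla\psi|^2$, and the critical term $I(t)$ is absorbed by a three-factor Young inequality after the Sobolev interpolation $\|(\psi)^{r/4}\|_{L^{\widetilde q}}\lesssim\|\varphi^\delta\|_{L^r}^{r/p}\|\nabla(\psi)^{r/4}\|_{L^2}^{1-2/p}$; what remains for Gr\"onwall is the subcritical quantity $\|\varphi^\delta\|_{L^r}^r\|w\|_{L^q}^p$, with a constant uniform in $r$, and one lets $r\to\infty$ at the end. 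In other words, at critical scaling the $L^\infty$ bound must be reached through an $L^r$ (Moser-type) energy scheme exploiting the gradient of $|\varphi|^{r/2}$, not through a fixed-point on the mild formulation. To repair your proof, replace the Duhamel/Gronwall step by such an $L^r$ estimate on your (correct) scalar differential inequality; the mollification and compactness endgame can then stay as you wrote it. (As a side remark, your bookkeeping yields $\nu^{-(p-1)}$ rather than the stated $\nu^{-(p-2)}$ in the exponential; this is harmless for the applications, where $\nu$ is normalised to $1$, but it is another sign that the closing step was not actually carried out.)
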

In the right hand side of the main equation, the quantity $ - ^t \nabla \varphi \cdot a$ is a shorthand for
$$ - \nabla (\varphi \cdot a) +  ^t \nabla a \cdot \varphi $$
and this last expression makes sense in $L^2(\mathbb{R}_+, \dot{H}^{-1}_{loc}(\mathbb{R}^d)) + L^2(\mathbb{R}_+ \times \mathbb{R}^d)$ provided that $\varphi$ is bounded in space-time.
Using coordinates, the different terms expand respectively as
$$(^t \nabla \varphi \cdot a)_i = \sum_{j=1}^d \partial_i \varphi_j a_j ; $$
$$(\nabla (\varphi \cdot a))_i = \sum_{j=1}^d  \partial_i(\varphi_j a_j) ; $$
$$(^t \nabla a \cdot \varphi)_i =  \sum_{j=1}^d \partial_i a_j \varphi_j.$$

Although the left-hand sides of $(C_{NS})$ and its adjoint equation $(C'_{NS})$ are almost identical, their right-hand sides are different.
This discrepancy has striking consequences on their global behaviour, in that $(C'_{NS})$ does possess a generalized maximum principle, while $(C_{NS})$ does not.
That fact is the core of our paper, without which no conclusion on the Navier-Stokes and Euler equations could have been drawn.
Conversely, we are able to prove a uniqueness result for $(C_{NS})$ while we do not expect any analogous result for $(C'_{NS})$, at least at the present time.

As a consequence of Theorem \ref{UniquenessSerrin}, we give an alternative proof of the Serrin theorem in most cases.
This new proof has the advantage of making a stronger use of the algebra of the Navier-Stokes equations than the previous one.
To avoid technical details which would only obscure the proof, we choose to present it in the case of the three dimensional torus.
An analogue exists when the regularity assumption is written on the whole space $\mathbb{R}^3$, or a subdomain thereof, with a similar proof and some minor adjustments.
We recall the theorem of J. Serrin in its improved form by Y. Giga in \cite{Giga}, written with integrability assumptions on the Leray solution.
\begin{theorem}[J. Serrin]
 Let $u = u(t,x)$ be a Leray solution of the Navier-Stokes equations 
$$
  (NS) \left \{
\begin{array}{c  c}
    \partial_t u + \nabla \cdot  (u \otimes u) - \Delta u = - \nabla p \\
    \text{div }u = 0 \\
    u (0)  = u_0 \in L^2(\mathbb{T}^3)  \\
\end{array}
\right.
$$
 on $\mathbb{R}_+ \times \mathbb{T}^3$.
 Assume the existence of times $T_2 > T_1 > 0$ and exponents $2 \leq p < \infty, 3 < q \leq \infty$ such that $u$ belongs to $L^p(]T_1,T_2[, L^q(\mathbb{T}^3))$.
 Then $u$ belongs to $\mathcal{C}^{\infty}(]T_1,T_2[ \times \mathbb{T}^3)$.
 \label{NouvellePreuveSerrin}
\end{theorem}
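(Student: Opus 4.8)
The plan is to deduce the smoothness of $u$ from the \emph{linear} uniqueness statement of Theorem \ref{UniquenessSerrin}, applied to the vorticity $\Omega := \nabla \wedge u$, which solves the frozen-coefficient problem $(NSV)$. The decisive point, in the spirit stressed after Theorem \ref{UniquenessSerrin}, is that $\Omega$ itself lives only in $L^2$, so energy methods are unavailable for it directly; instead I will build a \emph{separate}, manifestly regular solution of the very same linear equation and identify the two by uniqueness. We may and do assume the Serrin relation $\tfrac 2p + \tfrac 3q = 1$: if it holds with a strict inequality, on the bounded interval $]T_1,T_2[$ the embedding $L^p \hookrightarrow L^{\tilde p}$ for $\tilde p \le p$ lets me lower $p$ until equality, and since $p < \infty$ we stay away from the borderline case requiring the smallness hypothesis of the Remark. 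A Leray solution on the torus satisfies $u \in L^2(\mathbb{R}_+, \dot{H}^1(\mathbb{T}^3))$, whence $\Omega \in L^2(]T_1,T_2[ \times \mathbb{T}^3)$, and $\Omega$ is a distributional solution of
\begin{equation*}
 \partial_t \Omega + \nabla \cdot (\Omega \otimes u) - \Delta \Omega = \nabla \cdot (u \otimes \Omega),
\end{equation*}
that is, of $(C_{NS})$ with the frozen data $v = w = u \in L^2(\dot{H}^1) \cap L^p(L^q)$ and $\nu = 1$. It thus remains to prove that this $\Omega$ is smooth on $]T_1,T_2[ \times \mathbb{T}^3$.

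Fix a Lebesgue time $t_0 \in ]T_1,T_2[$ with $\Omega(t_0) \in L^2(\mathbb{T}^3)$ (almost every $t_0$ qualifies). I would then \emph{construct} a solution $\Omega_s$ of the same linear equation $(NSV)$ on $]t_0,T_2[$ with $\Omega_s(t_0) = \Omega(t_0)$ which is smooth for $t > t_0$. Since the equation is linear, a Galerkin (or Friedrichs) approximation produces $\Omega_s$ with no smallness condition whatsoever; the critical Serrin bound $u \in L^p(L^q)$ is exactly what closes the energy estimate, controlling the stretching-type term $\nabla \cdot (u \otimes \Omega_s)$ by a Gagliardo--Nirenberg interpolation that is absorbed into $\|\nabla \Omega_s\|_{L^2}^2$, and yields $\Omega_s \in L^\infty_t L^2 \cap L^2_t \dot{H}^1$. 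Climbing the usual parabolic regularity ladder — successive estimates in $\dot{H}^s$, with $u$ entering only through its scaling-invariant norm — then upgrades $\Omega_s$ to $\mathcal{C}^\infty(]t_0,T_2[ \times \mathbb{T}^3)$. Crucially, the same computations are \emph{not} licit for $\Omega$ itself, whose low regularity forbids these integrations by parts; this is precisely why a comparison solution is needed.

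The two solutions are matched by duality. The difference $a := \Omega - \Omega_s$ lies in $L^2(]t_0,T_2'[ \times \mathbb{T}^3)$ for every $T_2' < T_2$, solves $(C_{NS})$ with $v = w = u$ and vanishing initial datum, and — this is the point emphasised in the discussion of Theorem \ref{UniquenessSerrin} — the argument uses neither that $a$ is a curl nor that it is divergence free, only that it is an $L^2$ distributional solution of the linear equation. The torus analogue of Theorem \ref{UniquenessSerrin}, proved identically via the dual existence result Theorem \ref{ExistenceSerrin}, then forces $a \equiv 0$, so $\Omega = \Omega_s$ is smooth on $]t_0,T_2[$. Finally I recover $u$ from $\Omega$ by Biot--Savart on the torus: solving $-\Delta \psi = \Omega$ with $\nabla \cdot \psi = 0$ and setting $u = \nabla \wedge \psi$ plus its (spatially constant) mean, elliptic regularity gives spatial smoothness and the momentum equation upgrades this to full space-time smoothness. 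Letting $t_0 \downarrow T_1$ and $T_2' \uparrow T_2$ exhausts $]T_1,T_2[$.

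The main obstacle is the middle step: the construction and, above all, the regularity bootstrap for $\Omega_s$ at the \emph{critical} Serrin scaling, where each successive gain of regularity must be extracted from the scaling-invariant norm of $u$ with no margin to spare. What keeps this tractable is the linearity of the equation for $\Omega_s$ — which grants global solvability on $]t_0,T_2[$ without any smallness — while Theorem \ref{UniquenessSerrin} does the genuinely new work of transferring that regularity back to the true, low-regularity vorticity $\Omega$, something no energy estimate on $\Omega$ could accomplish.
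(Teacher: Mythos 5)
Your overall strategy is the paper's: treat $\Omega = \nabla\wedge u$ as an $L^2$ distributional solution of the frozen-coefficient problem $(C_{NS})$ with $v=w=u$, construct a second, regular solution of the same linear equation by energy methods (the Serrin norm of $u$ being exactly what closes the $L^2$ estimate), and identify the two via Theorem \ref{UniquenessSerrin}. Your localization by a Lebesgue initial time $t_0$ with $\Omega(t_0)\in L^2$ is a harmless variant of the paper's time cutoffs $\chi,\varphi$ (which have the advantage of keeping zero initial data and pushing the localization error $\Omega\,\partial_t\chi$ into an $L^2$ forcing term), and the reduction to the critical relation $\tfrac2p+\tfrac3q=1$ is sensible housekeeping.

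The gap is in the bootstrap, precisely the step you flag as the main obstacle. You propose to run the full parabolic ladder on the comparison solution $\Omega_s$ first, ``with $u$ entering only through its scaling-invariant norm,'' and to invoke uniqueness once at the end. This cannot work: the linear equation has $u$ as a coefficient, and already the $\dot H^1$ energy estimate for $\Omega_s$ requires differentiating $\nabla\cdot(u\otimes\Omega_s)$ and $\nabla\cdot(\Omega_s\otimes u)$, which produces terms in $\nabla u$ that are not controlled by $\|u\|_{L^p L^q}$ --- a priori $\nabla u$ lies only in $L^2_{t,x}$, and a solution of a linear parabolic equation cannot be made smoother than its coefficients permit. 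The extra regularity of $u$ needed at each rung becomes available only because $u$ is the Biot--Savart inverse of $\Omega$, and $\Omega$ inherits the regularity of the comparison solution only \emph{after} the uniqueness theorem has been applied. The argument must therefore interleave: construct a regular solution at level $s$, identify it with the (cut off) vorticity by Theorem \ref{UniquenessSerrin}, deduce $u\in L^\infty_{loc}\dot H^{s+1}\cap L^2_{loc}\dot H^{s+2}$, and only then perform the level-$(s+1)$ energy estimate on the differentiated equation. This is exactly the induction carried out in the paper's proof; as written, your single-pass bootstrap of $\Omega_s$ stalls after the first step, i.e.\ after $L^\infty L^2\cap L^2\dot H^1$.
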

 Besides reproving in a novel way the results of J. Serrin and his continuators, an immediate corollary of Theorem \ref{ExistenceSerrin} is the following.

  \begin{theorem}
   Let $d \geq 3$ be an integer.
   Let $\nu$ and $T$ be strictly positive real numbers.
   Let $u$ be a strong solution of the Navier-Stokes equations
 $$
    \left \{
 \begin{array}{c  c}
     \partial_t u + \nabla \cdot  (u \otimes u) - \nu \Delta u =  - \nabla p \\
     \text{div }u = 0 \\
 \end{array}
 \right.
 $$
 on $]0,T[ \times \mathbb{R}^d$.
 Then, there exists a constant $C$ depending only on $d$ such that for any $0 <  t < T$ and any $2 \leq p < \infty$, $d < q \leq \infty$ satisfying $\frac 2p + \frac dq = 1$, there holds
 \begin{equation}
  \|\Omega(t)\|_{L^1(\mathbb{R}^d)} \leq \|\Omega(0)\|_{L^1(\mathbb{R}^d)} \exp\left[\frac{C^p}{p\nu^{p-2}} \int_0^t \|u(s)\|_{L^q(\mathbb{R}^d)}^p ds \right].
 \end{equation}
   \label{BorneL1VorticiteForte}
  \end{theorem}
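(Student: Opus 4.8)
The plan is to read off the bound from the very duality that underlies Theorem \ref{ExistenceSerrin}, exactly as Theorem \ref{UniquenessSerrin} is deduced from it, but now retaining the multiplicative constant instead of discarding it. First I would record that the vorticity $\Omega = \nabla\wedge u$ of a strong solution solves the system $(NSV)$, namely
\[
\partial_t\Omega + \nabla\cdot(\Omega\otimes u) - \nu\Delta\Omega = \nabla\cdot(u\otimes\Omega),
\]
which is precisely $(C_{NS})$ with the choices $v=u$, $w=u$ and $a=\Omega$. The only algebraic input needed is that the vortex-stretching be written in divergence form $\nabla\cdot(u\otimes\Omega)$, and since $u$ is smooth this is a genuine identity, the equation holding classically.

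Next I would express the $L^1$ norm by duality: for fixed $t\in(0,T)$,
\[
\|\Omega(t)\|_{L^1(\mathbb{R}^d)} = \sup\left\{\int_{\mathbb{R}^d}\Omega(t,x)\cdot\varphi_0(x)\,dx \;:\; \varphi_0\in\mathcal{D}(\mathbb{R}^d),\ \|\varphi_0\|_{L^\infty}\leq 1\right\}.
\]
For each admissible $\varphi_0$ I would invoke Theorem \ref{ExistenceSerrin} with $v=w=u$ to produce a solution $\varphi$ of the adjoint problem $(C'_{NS})$ carrying the datum $\varphi_0$ and satisfying the growing maximum principle (\ref{GrowingMaximumPriciple}); after the time reversal $s\mapsto t-s$ described just below Theorem \ref{Existence}, $\varphi$ becomes a backward solution realizing the value $\varphi_0$ at time $t$.

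The key step is that the pairing is conserved, $\frac{d}{ds}\int_{\mathbb{R}^d}\Omega(s)\cdot\varphi(s)\,dx = 0$. This is nothing but the defining property of the adjoint: the transport terms cancel thanks to $\mathrm{div}\,u=0$, the dissipative terms match after two integrations by parts, and the stretching term $\nabla\cdot(u\otimes\Omega)$ pairs with the right-hand side ${}^t\nabla\varphi\cdot u$ of $(C'_{NS})$. Integrating in $s$ transfers the mass to the initial time,
\[
\int_{\mathbb{R}^d}\Omega(t)\cdot\varphi_0\,dx = \int_{\mathbb{R}^d}\Omega(0)\cdot\varphi(t)\,dx,
\]
and Hölder's inequality together with (\ref{GrowingMaximumPriciple}) gives
\[
\left|\int_{\mathbb{R}^d}\Omega(t)\cdot\varphi_0\,dx\right| \leq \|\Omega(0)\|_{L^1}\,\|\varphi(t)\|_{L^\infty} \leq \|\Omega(0)\|_{L^1}\exp\left[\frac{C^p}{p\nu^{p-2}}\int_0^t\|u(s)\|_{L^q}^p\,ds\right].
\]
Taking the supremum over $\varphi_0$ then yields the claimed estimate.

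As for the main obstacle: because $u$ is a strong solution, $\Omega$ is smooth, so the genuinely delicate analysis — the construction of a merely bounded $\varphi$ and its maximum principle in the scaling-critical, low-regularity regime — has already been done in Theorem \ref{ExistenceSerrin} and is used here as a black box. What remains to be checked is that the conservation of the pairing is licit at the integrability at hand, i.e.\ that the products and integrations by parts above are justified and that no flux escapes at spatial infinity, and that the $L^1$–$L^\infty$ duality may be restricted to $\varphi_0\in\mathcal{D}(\mathbb{R}^d)$. Given the smoothness and decay afforded by a strong solution these are routine, which is why the assertion is an immediate corollary; the one point genuinely worth spelling out is the matching of the stretching term with the adjoint right-hand side, which relies on writing the vortex-stretching in divergence form exactly as emphasized after Theorem \ref{ExistenceSerrin}.
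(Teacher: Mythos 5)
Your proposal is correct and follows essentially the same route as the paper: pair $\Omega$ with the backward adjoint solution furnished by Theorem \ref{ExistenceSerrin} (with $v=w=u$), observe that the pairing is transferred to the initial time, and conclude by the growing maximum principle \eqref{GrowingMaximumPriciple} together with $L^1$--$L^\infty$ duality. The only cosmetic difference is that the paper runs the argument on $[s,t]$ with $s>0$ and lets $s\to 0$ at the end, whereas you work directly on $[0,t]$.
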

  
  Finally, applying Theorem \ref{Uniqueness} to the 2D Euler equations on the torus, one gets the following statement.
  
  \begin{theorem}
  Let $p \geq  2$ be a real number.
  Let $u$ be a weak solution of the Euler equations starting from zero initial data
   $$
    \left \{
 \begin{array}{c  c}
     \partial_t u + \nabla \cdot  (u \otimes u) =  - \nabla p \\
     \text{div }u = 0 \\
     u(0) = 0
 \end{array}
 \right.
 $$
 and assume that $\omega := \text{curl }u$ belongs to $L^{\infty}(\mathbb{R}_+, L^p(\mathbb{T}^2))$.
 Then $u$ is identically zero on $\mathbb{R}_+ \times \mathbb{T}^2$.
   \label{UniquenessZeroEuler}
  \end{theorem}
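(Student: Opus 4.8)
The plan is to reduce the statement to the scalar transport uniqueness of Theorem \ref{Uniqueness} applied to the vorticity, the essential feature being that on $\mathbb{T}^2$ the velocity gradient is controlled by the vorticity and that the hypothesis $p \geq 2$ is exactly what makes the two exponents fit together under duality. First I would take the curl of the momentum equation. Since $\text{div } u = 0$, the convection term reads $\nabla \cdot (u \otimes u) = u \cdot \nabla u$, whose two-dimensional scalar curl is $u \cdot \nabla \omega = \nabla \cdot (\omega u)$, while the pressure gradient is curl-free. Hence $\omega$ is a distributional solution of
\begin{equation*}
 \partial_t \omega + \nabla \cdot (\omega u) = 0, \qquad \omega(0) = 0,
\end{equation*}
that is, of the problem $(C)$ with $d = 2$, $\nu = 0$, $a = \omega$ and $v = u$, the initial datum vanishing because $u(0) = 0$. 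I would check that this manipulation is licit at the level of distributions: from $\omega \in L^{\infty}(\mathbb{R}_+, L^p(\mathbb{T}^2))$ with $p \geq 2$, the Biot--Savart law on the torus together with Sobolev embedding places $u$ in $L^{\infty}(\mathbb{R}_+, L^r(\mathbb{T}^2))$ for large $r$, so that $u \otimes u$ is locally integrable and taking the curl of the equation is meaningful.

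Next I would verify the integrability hypotheses of Theorem \ref{Uniqueness}. I take the time--space exponents of $a = \omega$ to be $(\infty, p)$, so that, in the notation of that theorem, the transport field is required to lie in $L^{1}(\mathbb{R}_+, \dot{W}^{1,p'}(\mathbb{T}^2))$ with $p' = p/(p-1)$. On the torus the gradient of the velocity is recovered from the vorticity by a Calder\'on--Zygmund operator, whence $\|\nabla u(t)\|_{L^{p'}} \lesssim \|\omega(t)\|_{L^{p'}}$; since $\mathbb{T}^2$ has finite measure and $p \geq 2$ forces $p' \leq 2 \leq p$, we get $\|\omega(t)\|_{L^{p'}} \lesssim \|\omega(t)\|_{L^p} \leq \|\omega\|_{L^{\infty}_t L^p_x}$. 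This is precisely the point where the assumption $p \geq 2$ enters. Thus $\nabla u \in L^{\infty}(\mathbb{R}_+, L^{p'}(\mathbb{T}^2))$, which on each finite interval $[0,T^*]$ embeds into $L^{1}([0,T^*], \dot{W}^{1,p'})$; as the duality argument underlying Theorem \ref{Uniqueness} uses the transport field only on $[0,T^*]$, this suffices, and one concludes $\omega \equiv 0$ on $[0,T^*]$, hence on all of $\mathbb{R}_+ \times \mathbb{T}^2$.

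It then remains to pass from $\omega \equiv 0$ back to $u \equiv 0$. A divergence-free, curl-free field on the torus is spatially constant (in Fourier variables, $k \cdot \hat{u}(k) = k^{\perp} \cdot \hat{u}(k) = 0$ kills every mode $k \neq 0$), so $u(t, \cdot) = \bar{u}(t)$ for each $t$. Integrating the momentum equation over $\mathbb{T}^2$ and using that the integrals of $\nabla \cdot (u \otimes u)$ and $\nabla p$ vanish on the torus yields $\partial_t \bar{u} = 0$, whence $\bar{u}(t) = \bar{u}(0) = 0$ by the initial condition. Therefore $u \equiv 0$.

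The main obstacle I anticipate is the transfer of Theorem \ref{Uniqueness} from $\mathbb{R}^d$ to $\mathbb{T}^2$. The duality scheme and the dual existence result of Theorem \ref{Existence} --- construction of an $L^{\infty}$ solution of the adjoint equation obeying the maximum principle --- should carry over to $\mathbb{T}^2$ essentially unchanged, arguably more comfortably owing to compactness and the absence of decay issues at infinity, the homogeneous Sobolev seminorm of $u$ being controlled by $\|\nabla u\|_{L^{p'}}$ directly. The only remaining subtlety is the mismatch between the $L^{\infty}_t$ control one actually has on $\nabla u$ and the $L^1_t$ phrasing of the theorem, which is harmless since uniqueness on $[0,T^*]$ is a local-in-time statement and the interval is finite.
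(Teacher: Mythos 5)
Your proposal is correct and follows exactly the route the paper intends: the paper gives no separate proof of this statement, saying only that it follows by ``applying Theorem \ref{Uniqueness} to the 2D Euler equations on the torus,'' which is precisely your reduction --- pass to the vorticity equation $\partial_t\omega+\nabla\cdot(\omega u)=0$ with $\omega(0)=0$, take $a=\omega\in L^{\infty}(\mathbb{R}_+,L^p)$ and $v=u$, and check $\nabla u\in L^{\infty}_t L^{p'}_x\subset L^1_{loc,t}L^{p'}_x$ via Calder\'on--Zygmund together with $p'\leq 2\leq p$ on the finite-measure torus. The final recovery of $u\equiv 0$ from $\omega\equiv 0$ through conservation of the spatial mean is the standard closing step and is handled correctly.
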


\section{Proofs}
 We state here a commutator lemma, similar to Lemma II.1 in \cite{DiPerna-Lions}, which we will use in the proof of Theorem \ref{Uniqueness}.
 \begin{lemma}
 Let $T > 0$.
 Let $v$ be a fixed, divergence free vector field in $L^{p'}(\mathbb{R}_+, \dot{W}^{1,q'}(\mathbb{R}^d))$.
 Let $a$ be a fixed function in $L^p(\mathbb{R}_+, L^q (\mathbb{R}^d))$.
 Let $\rho = \rho(x)$ be some smooth, positive and compactly supported function on $\mathbb{R}^d$.
 Normalize $\rho$ to have unit norm in $L^1(\mathbb{R}^d)$ and define $\rho_{\varepsilon} := \varepsilon^{-d} \rho\left(\frac{\cdot}{\varepsilon}\right)$.
 Define the commutator $C^{\varepsilon}$ by 
$$
 C^{\varepsilon}(t,x) :=  v(t,x) \cdot (\nabla \rho_{\varepsilon} \ast a(t))(x) - (\nabla \rho_{\varepsilon} \ast (v(t) a(t)))(x) .
$$
 Then, as $\varepsilon \to 0$,
 $$\|C^{\varepsilon}\|_{L^1(\mathbb{R}_+ \times \mathbb{R}^d)} \to 0. $$
 \label{LemmeCommutateur}
\end{lemma}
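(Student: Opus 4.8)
The plan is to reduce the statement to the classical DiPerna--Lions commutator estimate, transposed to the mixed space--time Lebesgue setting.

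First I would rewrite $C^{\varepsilon}$ in a form where the absent derivative on $a$ never appears. Unfolding the convolutions (interpreting $\nabla\rho_{\varepsilon} * (v(t)a(t))$ as the scalar $\int \nabla\rho_{\varepsilon}(x-y)\cdot v(t,y)a(t,y)\,dy$) and combining the two terms, one gets
$$C^{\varepsilon}(t,x) = \int_{\mathbb{R}^d}\nabla\rho_{\varepsilon}(x-y)\cdot\bigl(v(t,x)-v(t,y)\bigr)a(t,y)\,dy.$$
The crucial move is the change of variables $y = x-\varepsilon z$, which turns the singular kernel $\nabla\rho_{\varepsilon}(x-y) = \varepsilon^{-d-1}(\nabla\rho)\bigl(\tfrac{x-y}{\varepsilon}\bigr)$ into a finite difference of $v$ divided by $\varepsilon$,
$$C^{\varepsilon}(t,x) = \int_{\mathbb{R}^d}(\nabla\rho)(z)\cdot\frac{v(t,x)-v(t,x-\varepsilon z)}{\varepsilon}\,a(t,x-\varepsilon z)\,dz.$$
All the roughness of $a$ is now carried by the harmless translation $a(t,\cdot-\varepsilon z)$, while the singular factor is tested against a quantity controlled by $\nabla v$.

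Second, I would establish an $L^1$ bound uniform in $\varepsilon$. Writing $v(t,x)-v(t,x-\varepsilon z) = \varepsilon\int_0^1 \nabla v(t,x-(1-\tau)\varepsilon z)\,z\,d\tau$ and taking the $L^{q'}_x$ norm gives, by translation invariance, $\|\varepsilon^{-1}(v(t,\cdot)-v(t,\cdot-\varepsilon z))\|_{L^{q'}_x}\le |z|\,\|\nabla v(t)\|_{L^{q'}}$. Hölder in $x$ (exponents $q',q$) and then in $t$ (exponents $p',p$), together with $\|a(t,\cdot-\varepsilon z)\|_{L^q} = \|a(t)\|_{L^q}$, yields
$$\|C^{\varepsilon}\|_{L^1(\mathbb{R}_+\times\mathbb{R}^d)}\le \Bigl(\int_{\mathbb{R}^d}|z|\,|(\nabla\rho)(z)|\,dz\Bigr)\,\|\nabla v\|_{L^{p'}(\mathbb{R}_+,L^{q'})}\,\|a\|_{L^p(\mathbb{R}_+,L^q)},$$
which is finite since $\rho$ is compactly supported.

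Third, I would identify the limit. At fixed $t$, as $\varepsilon\to 0$ the difference quotient converges to $\nabla v(t,x)\,z$ while $a(t,\cdot-\varepsilon z)\to a(t,\cdot)$; using $\int z_i\,\partial_j\rho(z)\,dz = -\delta_{ij}$ (integration by parts, recalling $\int\rho = 1$), the formal pointwise limit is
$$a(t,x)\int_{\mathbb{R}^d}(\nabla\rho)(z)\cdot\bigl(\nabla v(t,x)\,z\bigr)\,dz = -a(t,x)\,(\nabla\cdot v)(t,x) = 0,$$
the last equality being exactly the divergence-free hypothesis. To make this rigorous I would prove the spatial convergence $\|C^{\varepsilon}(t)\|_{L^1_x}\to 0$ for a.e.\ $t$: for smooth $v$ and $a$ it follows from the explicit pointwise limit and dominated convergence, and the general case from density together with the uniform bound above (the limiting map $v\mapsto -a\,\nabla\cdot v$ being continuous in the same norms). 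Since the spatial norms are dominated by the integrable function $t\mapsto(\int|z||\nabla\rho|)\,\|\nabla v(t)\|_{L^{q'}}\|a(t)\|_{L^q}$, a final dominated convergence in time upgrades this to convergence in $L^1(\mathbb{R}_+\times\mathbb{R}^d)$.

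The main obstacle is precisely this limit identification at the available regularity: because $a$ carries no derivative, one cannot pass to the limit by differentiating $a$, and must instead rely on strong continuity of translations in $L^q$ and $L^{q'}$ combined with the uniform bound, the divergence-free assumption entering decisively to cancel the only surviving term. The one point demanding extra care is the density step at the endpoint exponents $p,q\in\{1,\infty\}$, where smooth functions need not be dense and continuity of translations may fail in the relevant factor; there I would replace the failing approximation by mollification of $v$ in space (which preserves the divergence-free condition) and argue against the finite factor, the uniform bound again supplying the required control.
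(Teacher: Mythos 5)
Your proposal is correct in substance and follows the paper's algebraic skeleton exactly: the same rewriting of $C^{\varepsilon}$ as an integral against $\nabla\rho_{\varepsilon}(x-y)\cdot(v(t,x)-v(t,y))a(t,y)$, the same rescaling $y=x\mp\varepsilon z$, the same Taylor/finite-difference representation, and the same uniform bilinear bound $\|C^{\varepsilon}\|_{L^1}\lesssim\|a\|_{L^p L^q}\|\nabla v\|_{L^{p'}L^{q'}}$. Where you genuinely diverge is the limit identification. You follow the classical DiPerna--Lions route: prove convergence for smooth data and conclude by density against the uniform bound, which forces you to treat the endpoint exponents separately. The paper avoids density altogether: after the Taylor step it replaces $a(t,x+\varepsilon z)$ by $a(t,x+r\varepsilon z)$ (the same translation as in the argument of $\nabla v$), the error being controlled by strong continuity of translations in whichever of $L^q,L^{q'}$ has a finite exponent; the resulting quantity $\widetilde{C}^{\varepsilon}$ is then a translation average of the \emph{fixed} function $U=a\,\nabla v\in L^1(\mathbb{R}_+\times\mathbb{R}^d)$ tested against $\nabla\rho(z)\otimes z$, so a second application of translation continuity, now in $L^1$ where it never fails, gives $\widetilde{C}^{\varepsilon}\to a\,\mathrm{div}\,v=0$. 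This handles all exponents, including $p,q\in\{1,\infty\}$, with a single dichotomy and no approximation of the data. One caution on your patch for the endpoints: "mollify $v$ and argue against the finite factor" works when $q=\infty$ (then $\nabla v_{\eta}\to\nabla v$ in $L^{q'}=L^1$), but in the case $q=1$, $q'=\infty$ it is $a$, not $v$, that must be approximated, since $\nabla v_{\eta}$ need not converge to $\nabla v$ in $L^{\infty}$; your stated principle points at the right fix, but as written the mollified factor is on the wrong side there. If you want a uniform treatment, adopt the paper's two-step translation argument on the product $a\,\nabla v$.
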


\begin{proof}
 For almost all $(t,x)$ in $\mathbb{R}_+ \times \mathbb{R}^d$, we have
 $$C^{\varepsilon}(t,x) = \int_{\mathbb{R}^d} \frac{1}{\varepsilon^d}a(t,y) \frac{v(t,x)-v(t,y)}{\varepsilon} \cdot \nabla \rho\left(\frac{x-y}{\varepsilon}\right) dy.$$
 Performing the change of variable $y = x + \varepsilon z$ yields
 $$C^{\varepsilon}(t,x) = \int_{\mathbb{R}^d} a(t,x+\varepsilon z) \frac{v(t,x)-v(t,x+\varepsilon z)}{\varepsilon} \cdot \nabla \rho(z) dz.$$
 Using the Taylor formula
 $$v(\cdot,x+\varepsilon z) - v(\cdot,x) = \int_0^1 \nabla v(\cdot,x+r\varepsilon z) \cdot (\varepsilon z) dr, $$
 which is true for smooth functions and extends to $\dot{W}^{1,q'}(\mathbb{R}^d)$ thanks to the continuity of both sides on this space 
 and owing to Fubini's theorem to exchange integrals, we get the nicer formula
 $$C^{\varepsilon}(t,x) = - \int_0^1 \int_{\mathbb{R}^d} a(t,x+\varepsilon z) \nabla v(t,x+r\varepsilon z) : ( \nabla \rho(z) \otimes z) dz dr,$$
 where $:$ denotes the contraction of rank two tensors.
 Because $q$ and $q'$ are dual H\"older exponents, at least one of them is finite.
 We assume for instance that $q < \infty$, the case $q' < \infty$ being completely similar.
 
 Let 
 $$\widetilde{C}^{\varepsilon}(t,x) := - \int_0^1 \int_{\mathbb{R}^d} a(t,x+r\varepsilon z) \nabla v(t,x+r\varepsilon z) : ( \nabla \rho(z) \otimes z) dz dr. $$
 We claim that, as $\varepsilon \to 0$,
 $$\|C^{\varepsilon}-\widetilde{C}^{\varepsilon}\|_{L^1(\mathbb{R}_+ \times \mathbb{R}^d)} \to 0.$$
 Integrating both in space and time and owing to H\"older's inequality, we have
 \begin{multline*}
   \|C^{\varepsilon}-\widetilde{C}^{\varepsilon}\|_{L^1(\mathbb{R}_+ \times \mathbb{R}^d)} \leq \\
 \int_0^1 \int_{\mathbb{R}^d} \int_0^{\infty} 
 \|a(t,\cdot+\varepsilon z)-a(t,\cdot+r\varepsilon z)\|_{L^q(\mathbb{R}^d)} \|\nabla v(t)\|_{L^{q'}(\mathbb{R}^d)} |\nabla \rho(z) \otimes z|  dt dz dr.
 \end{multline*}
Since $a \in L^p(\mathbb{R}_+, L^q(\mathbb{R}^d))$ and $q < \infty$, for almost any $t \in \mathbb{R}_+$, for all $z \in \mathbb{R}^d$ and $r \in [0,1]$, 
$$\|a(t,\cdot+\varepsilon z)-a(t,\cdot+r\varepsilon z)\|_{L^q(\mathbb{R}^d)} \to 0 $$
as $\varepsilon \to 0$.
Thanks to the uniform bound
\begin{multline*}
\|a(t,\cdot+\varepsilon z)-a(t,\cdot+r\varepsilon z)\|_{L^q(\mathbb{R}^d)}\|\nabla v(t)\|_{L^{q'}(\mathbb{R}^d)} |\nabla \rho(z) \otimes z| \leq \\
2 \|a(t)\|_{L^q(\mathbb{R}^d)}\|\nabla v(t)\|_{L^{q'}(\mathbb{R}^d)} |\nabla \rho(z) \otimes z|,
\end{multline*}
we may invoke the dominated convergence theorem to get the desired claim.

From this point on, we denote by $U(t,x)$ the quantity $a(t,x) \nabla v(t,x)$.
We notice that $U$ is a fixed function in $L^1(\mathbb{R}_+ \times \mathbb{R}^d)$ and that, by definition,
$$\widetilde{C}^{\varepsilon}(t,x) = - \int_0^1 \int_{\mathbb{R}^d} U(t,x+r\varepsilon z) : ( \nabla \rho(z) \otimes z) dz dr.$$
The normalization on $\rho$ yields the identity
$$- \int_{\mathbb{R}^d}  \nabla \rho(z)\otimes z  dz = \left(\int_{\mathbb{R}^d} \rho(z) dz\right) I_d = I_d, $$
where $I_d$ is the $d-$dimensional identity matrix.
This identity in turn entails that
$$\widetilde{C}^0(t,x) = a(t,x) \nabla v(t,x) : I_d = a(t,x) \text{ div }v(t,x) = 0. $$
A second application of the dominated convergence theorem to the function $U$ gives
$$\|\widetilde{C}^{\varepsilon} - \widetilde{C}^0\|_{L^1(\mathbb{R}_+ \times \mathbb{R}^d)} \to 0 $$
as $\varepsilon \to 0$, from which the lemma follows.
\end{proof}

 \begin{proof}[Proof of Theorem \ref{Existence}]
Let us choose some mollifying kernel $\rho = \rho(x)$ and denote $v_{\delta} := \rho_{\delta} \ast v$, where $\rho_{\delta}(x) := \delta^{-d} \rho(\frac{x}{\delta})$. 
Let $(C'_{\delta})$ be the Cauchy problem $(C')$ where we replaced $v$ by $v_{\delta}$.
The existence of a (smooth) solution $\varphi^{\delta}$ to $(C'_{\delta})$ is then easily obtained thanks to, for instance, a Friedrichs method combined with heat kernel estimates.
We now turn to the $L^{\infty}$ bound uniform in $\delta$.

Let $r \geq 2$ be a real number.
Multiplying the equation on $\varphi^{\delta}$ by $\varphi^{\delta} |\varphi^{\delta}|^{r-2}$ and integrating in space and time, we get
$$
 \frac 1r \|\varphi^{\delta}(t)\|_{L^r(\mathbb{R}^d)}^r + (r-1) \int_0^t \| \nabla \varphi^{\delta}(s) |\varphi^{\delta}(s)|^{\frac{r-2}{2}} \|_{L^2(\mathbb{R}^d)}^2 ds 
 = \frac 1r \|\varphi_0\|_{L^r(\mathbb{R}^d)}^r.
$$
Discarding the gradient term, taking $r$-th root in both sides and letting $r$ go to infinity gives
\begin{equation}
 \|\varphi^{\delta}(t)\|_{L^{\infty}(\mathbb{R}^d)} \leq \|\varphi_0\|_{L^{\infty}(\mathbb{R}^d)}.
 \label{BorneInfinie}
\end{equation}
Thus, the family $(\varphi^{\delta})_{\delta}$ is bounded in $L^{\infty}(\mathbb{R}_+ \times \mathbb{R}^d)$.
Up to an extraction, $(\varphi^{\delta})_{\delta}$ converges weak$-\ast$ in $L^{\infty}(\mathbb{R}_+ \times \mathbb{R}^d)$ to some function $\varphi$.

As a consequence, because $v_{\delta} \to v$ strongly in $L^1_{loc}(\mathbb{R}_+ \times \mathbb{R}^d)$ as $\delta \to 0$, the following convergences hold :
$$\Delta \varphi^{\delta} \rightharpoonup^{\ast} \Delta \varphi \text{ in } L^{\infty}(\mathbb{R}_+, \dot{W}^{-2,\infty} (\mathbb{R}^d)) ;$$ 
$$ \varphi^{\delta}v^{\delta} \rightharpoonup \varphi v \text{ in } L^1_{loc}(\mathbb{R}_+\times \mathbb{R}^d). $$ 
In particular, such a $\varphi$ is a distributional solution of $(C')$ with the desired regularity.
 \end{proof}
 We are now in position to prove the main theorem of this paper.
 \begin{proof}[Proof of Theorem \ref{Uniqueness}]
   Let $\rho = \rho(x)$ be a radial mollifying kernel and define $\rho_{\varepsilon}(x) := \varepsilon^{-d} \rho(\frac{x}{\varepsilon})$.
 Convolving the equation on $a$ by $\rho_{\varepsilon}$ gives, denoting $a_{\varepsilon} := \rho_{\varepsilon} \ast a$,
$$
  (C_{\varepsilon}) \ \ \partial_t a_{\varepsilon} + \nabla \cdot (a_{\varepsilon} v) - \nu\Delta a_{\varepsilon} 
  = C^{\varepsilon},
$$
 where the commutator $C^{\varepsilon}$ has been defined in Lemma \ref{LemmeCommutateur}.
Notice that even without any smoothing in time, $a_{\varepsilon}$, $\partial_t a_{\varepsilon}$ lie respectively in $L^{\infty}(\mathbb{R}_+, \mathcal{C}^{\infty}(\mathbb{R}^d))$ and 
$L^1(\mathbb{R}_+, \mathcal{C}^{\infty}(\mathbb{R}^d))$, which is enough to make the upcoming computations rigorous.
In what follows, we let $\varphi^{\delta}$ be a solution of the Cauchy problem $(-C'_{\delta})$, where  $(-C'_{\delta})$ is $(-C')$ (defined in Theorem \ref{ExistenceSerrin}) with $v$ replaced by $v_{\delta}$.
Let us now multiply, for $\delta, \varepsilon > 0$ the equation $(C_{\varepsilon})$ by $\varphi^{\delta}$ and integrate in space and time.
After integrating by parts (which is justified by the high regularity of the terms we have written), we get
$$
 \int_0^T \int_{\mathbb{R}^d} \partial_t a_{\varepsilon}(s,x) \varphi^{\delta}(s,x) dx ds = 
 \langle a_{\varepsilon}(T), \varphi_0 \rangle_{\mathcal{D}'(\mathbb{R}^d), \mathcal{D}(\mathbb{R}^d)}  - 
 \int_0^T \int_{\mathbb{R}^d} a_{\varepsilon}(s,x)  \partial_t \varphi^{\delta}(s,x) dx ds.
$$
From this identity, it follows that
\begin{multline*}
 \langle a_{\varepsilon}(T), \varphi_0 \rangle_{\mathcal{D}'(\mathbb{R}^d), \mathcal{D}(\mathbb{R}^d)}  
 = \int_0^T \int_{\mathbb{R}^d} \varphi^{\delta}(s,x) C^{\varepsilon}(s,x) dx ds \\ - 
 \int_0^T \int_{\mathbb{R}^d} a_{\varepsilon}(s,x) \left(- \partial_t \varphi^{\delta}(s,x) -  \nabla \cdot (v(s,x)\varphi^{\delta}(s,x)) - \nu\Delta \varphi^{\delta}(s,x) \right) dx ds.
\end{multline*}
From Lemma \ref{LemmeCommutateur}, we know in particular that $C^{\varepsilon}$ belongs to $L^1(\mathbb{R}_+ \times \mathbb{R}^d)$ for each fixed $\varepsilon > 0$.
Thus, in the limit $\delta \to 0$, we have, for each $\varepsilon > 0$,
$$\int_0^T \int_{\mathbb{R}^d} \varphi^{\delta}(s,x) C^{\varepsilon}(s,x) dx ds \to \int_0^T \int_{\mathbb{R}^d} \varphi(s,x) C^{\varepsilon}(s,x) dx ds. $$
On the other hand, the definition of $\varphi^{\delta}$ gives
$$- \partial_t \varphi^{\delta} - \nabla \cdot (v \varphi^{\delta}) - \nu\Delta \varphi^{\delta} = \nabla \cdot ((v_{\delta} - v) \varphi^{\delta}). $$
Thus, the last integral in the above equation may be rewritten, integrating by parts,
$$- \int_0^T \int_{\mathbb{R}^d} \varphi^{\delta} (v_{\delta} - v)\cdot  \nabla a_{\varepsilon}(s,x) dx ds. $$
For each fixed $\varepsilon$, the assumption on $a$ entails that $\nabla a_{\varepsilon}$ belongs to $L^p(\mathbb{R}_+,L^q(\mathbb{R}^d))$.
Furthermore, it is an easy exercise to show that
$$\|v_{\delta} - v\|_{L^{p'}(\mathbb{R}_+,L^{q'}(\mathbb{R}^d))} \leq 
\delta \|\nabla v\|_{L^{p'}(\mathbb{R}_+,L^{q'}(\mathbb{R}^d))} \||\cdot|\rho\|_{L^1(\mathbb{R}^d)}. $$
Now, taking the limit $\delta \to 0$ while keeping $\varepsilon > 0$ fixed, we have
$$
  \langle a_{\varepsilon}(T), \varphi_0 \rangle_{\mathcal{D}'(\mathbb{R}^d), \mathcal{D}(\mathbb{R}^d)} = \int_0^T \int_{\mathbb{R}^d} \varphi(s,x) C^{\varepsilon}(s,x) dx ds .
$$
Taking the limit $\varepsilon \to 0$ and using Lemma \ref{LemmeCommutateur}, we finally obtain
$$
   \langle a(T), \varphi_0 \rangle_{\mathcal{D}'(\mathbb{R}^d), \mathcal{D}(\mathbb{R}^d)} = 0.
$$
This being true for any test function $\varphi_0$, $a(T)$ is the zero distribution and finally $a \equiv 0$.
 \end{proof}

 \begin{proof}[Proof of Theorem \ref{ExistenceSerrin}]
  The proof of this Theorem is very similar to that of Theorem 3.1 in \cite{Struwe}.
  We nevertheless reproduce it in our cse for the sake of completeness.
  For simplicity, we reduce to the case $\nu = 1$. 
  Let $\rho = \rho(x)$ be a radial mollifying kernel and let us denote $\rho_{\delta}(x) = \delta^{-d}\rho(\frac{x}{\delta})$.
  Let $w_{\delta} = \rho_{\delta} \ast w$ and $v_{\delta} = \rho_{\delta} \ast v$.
  Let $(C'_{\delta})$ be the Cauchy problem $(C')$ with $w,v$ replaced by $w_{\delta}, v_{\delta}$ respectively.
  The existence of a smooth solution $\varphi^{\delta}$ to the Cauchy problem $(C'_{\delta})$ is easy and thus omitted.
  We focus on the the relevant estimates.
  Let $r \geq 2$ be a real number.
  We first take the scalr product of the equation on $\varphi^{\delta}$ by $\varphi^{\delta}$ and carefully rearrange the laplacian term to get the following equation on $|\varphi^{\delta}|^2$
  \begin{equation}
   \frac 12 \partial|\varphi^{\delta}|^2 + \frac 12 v_{\delta} \cdot \nabla |\varphi^{\delta}|^2 - \frac{\nu}{2} \Delta |\varphi^{\delta}|^2 + \nu|\nabla \varphi^{\delta}|^2
   = \varphi^{\delta} \cdot (^t \nabla \varphi^{\delta} \cdot w_{\delta}).
  \end{equation}
  For notational convenience, we let $\psi^{(\delta)} := |\varphi^{\delta}|^2$ in the sequel.
  Now, multiplying this new equation by $|\varphi^{\delta}|^{r-2}$ and integrating in space and time, we get 
    \begin{multline*}
 \frac 1r \|\varphi^{\delta}(t)\|_{L^r(\mathbb{R}^d)}^r + \frac{r-2}{4}\nu\int_0^t \| (\psi^{(\delta)})^{\frac{r-4}{4}} \nabla \psi^{(\delta)}\|_{L^2(\mathbb{R}^d)}^2  ds +  \nu \int_0^t \| \nabla \varphi^{\delta}(s) |\varphi^{\delta}(s)|^{\frac{r-2}{2}} \|_{L^2(\mathbb{R}^d)}^2 ds \\
 = \frac 1r \|\varphi_0\|_{L^r(\mathbb{R}^d)}^r - \int_0^t \int_{\mathbb{R}^d}|\varphi^{\delta}|^{r-2} \varphi^{\delta} \cdot (^t \nabla \varphi^{\delta} \cdot w_{\delta}) dx ds .
\end{multline*}
Denote by $I(t)$ the integral on the right hand side.
Rewriting 
$$I(t) = \int_0^t \int_{\mathbb{R}^d}(\psi^{(\delta)})^{\frac r4} |\varphi^{\delta}|^{\frac{r-4}{2}}\varphi^{\delta} \cdot (^t \nabla \varphi^{\delta} \cdot w_{\delta}) dx ds,  $$
the H\"older inequality yields
$$
 |I(t)| \leq \int_0^t \|\nabla \varphi^{\delta}(s) |\varphi^{\delta}(s)|^{\frac{r-2}{2}}\|_{L^2(\mathbb{R}^d)} \|(\psi^{(\delta)})^{\frac r4}(s)\|_{L^{\widetilde{q}}(\mathbb{R}^d)}
 \|w_{\delta}(s)\|_{L^q(\mathbb{R}^d)} ds,
$$
where $\widetilde{q}$ is defined by $\frac 12 + \frac 1q + \frac{1}{\widetilde{q}} = 1$.
By the Sobolev embedding $\dot{H}^{1-\frac 2p}(\mathbb{R}^d) \hookrightarrow L^{\widetilde{q}}(\mathbb{R}^d)$, there exists a constant $C = C(p,d)$ such that 
$$\|(\psi^{(\delta)})^{\frac r4}(s)\|_{L^{\widetilde{q}}(\mathbb{R}^d)} \leq C \|(\psi^{(\delta)})^{\frac r4}(s)\|_{\dot{H}^{1-\frac 2p}(\mathbb{R}^d)}. $$
Since $d \geq 3$ and $0 \leq 1- \frac 2p < 1$, we may choose $C$ uniformly in $p$ for fixed $d$.
Interpolating $\dot{H}^{1-\frac 2p}$ between $L^2$ and $\dot{H}^1$ gives
\begin{multline*}
 \|(\psi^{(\delta)})^{\frac r4}(s)\|_{\dot{H}^{1-\frac 2p}(\mathbb{R}^d)} \leq 
\|(\psi^{(\delta)})^{\frac r4}(s)\|_{L^2(\mathbb{R}^d)}^{\frac 2p} \|\nabla (\psi^{(\delta)})^{\frac r4}(s)\|_{L^2(\mathbb{R}^d)}^{1-\frac 2p} \\
= \|\varphi^{\delta}(s)\|_{L^r(\mathbb{R}^d)}^{\frac rp} \|\nabla (\psi^{(\delta)})^{\frac r4}(s)\|_{L^2(\mathbb{R}^d)}^{1-\frac 2p}. 
\end{multline*}
As $\nabla (\psi^{(\delta)})^{\frac r4} = \frac r4  (\psi^{(\delta)})^{\frac{r-4}{4}}\nabla \psi^{(\delta)},$ we may now bound $|I(t)|$ from above by 
$$
 C \int_0^t \|\nabla \varphi^{\delta}(s) |\varphi^{\delta}(s)|^{\frac{r-2}{2}}\|_{L^2(\mathbb{R}^d)} 
 \left(\frac r4  \|(\psi^{(\delta)})^{\frac{r-4}{4}}(s)\nabla \psi^{(\delta)}(s)\|_{L^2(\mathbb{R}^d)}\right)^{1-\frac 2p}
 \|\varphi^{\delta}(s)\|_{L^r(\mathbb{R}^d)}^{\frac rp}
 \|w_{\delta}(s)\|_{L^q(\mathbb{R}^d)} ds.
$$
The Young inequality for real numbers yields, with $\widetilde{p}$ defined in the same way as $\widetilde{q}$,
\begin{multline*}
  |I(t)| \leq  \frac{\nu}{2} \int_0^t \|\nabla \varphi^{\delta}(s) |\varphi^{\delta}(s)|^{\frac{r-2}{2}}\|_{L^2(\mathbb{R}^d)}^2 ds 
 + \frac{r\nu }{4\widetilde{p}} \int_0^t 
    \|(\psi^{(\delta)})^{\frac{r-4}{4}}(s)\nabla \psi^{(\delta)}(s)\|_{L^2(\mathbb{R}^d)}^2 ds \\
    +  \frac{C^p}{p\nu^{p-2}}\int_0^t  \|\varphi^{\delta}(s)\|_{L^r(\mathbb{R}^d)}^r \|w_{\delta}(s)\|_{L^q(\mathbb{R}^d)}^p ds.
\end{multline*}
Absorbing the first two terms in the left-hand side of the inequality gives
$$\frac 1r \|\varphi^{\delta}(t)\|_{L^r(\mathbb{R}^d)}^r
\leq \frac 1r \|\varphi_0\|_{L^r(\mathbb{R}^d)}^r + \frac{C^p}{p\nu^{p-2}}\int_0^t  \|\varphi^{\delta}(s)\|_{L^r(\mathbb{R}^d)}^r \|w_{\delta}(s)\|_{L^q(\mathbb{R}^d)}^p ds. $$
By Gr\"onwall's inequality,
$$\|\varphi^{\delta}(t)\|_{L^r(\mathbb{R}^d)}
\leq  \|\varphi_0\|_{L^r(\mathbb{R}^d)} \exp\left(\frac{C^p}{p\nu^{p-2}}\int_0^t \|w_{\delta}(s)\|_{L^q(\mathbb{R}^d)}^p ds \right). $$
Letting $r$ go to infinity and using the trivial bound $\|w_{\delta}(s)\|_{L^q(\mathbb{R}^d)} \leq \|w(s)\|_{L^q(\mathbb{R}^d)}$ yields
\begin{equation}
 \|\varphi^{\delta}(t)\|_{L^{\infty}(\mathbb{R}^d)}
\leq  \|\varphi_0\|_{L^{\infty}(\mathbb{R}^d)} \exp\left(\frac{C^p}{p\nu^{p-2}}\int_0^t \|w(s)\|_{L^q(\mathbb{R}^d)}^p ds \right). 
\end{equation}
It only remains to take the limit $\delta\to 0$.
As the family $(\varphi^{\delta})_{\delta}$ is a bounded subset in $L^{\infty}(\mathbb{R}_+ \times \mathbb{R}^d)$, up to an extraction, there exists $\varphi$ in 
$L^{\infty}(\mathbb{R}_+ \times \mathbb{R}^d)$ such that 
$$\varphi^{\delta} \rightharpoonup^{\ast} \varphi \qquad  \text{in } L^{\infty}(\mathbb{R}_+ \times \mathbb{R}^d) \text{ as } \delta \to 0. $$
By Fatou's lemma, the bound 
\begin{equation}
 \|\varphi(t)\|_{L^{\infty}(\mathbb{R}^d)}
\leq  \|\varphi_0\|_{L^{\infty}(\mathbb{R}^d)} \exp\left(\frac{C^p}{p\nu^{p-2}}\int_0^t \|w(s)\|_{L^q(\mathbb{R}^d)}^p ds \right)
\end{equation}
follows.
On the other hand, since $v$ and $w$ belong to $L^2(\mathbb{R}_+, \dot{H}^1(\mathbb{R}^d))$, it is clear that
$$v_{\delta}, w_{\delta} \longrightarrow v, w \qquad \text{strongly in } L^2(\mathbb{R}_+, \dot{H}^1(\mathbb{R}^d)) \text{ as } \delta \to 0. $$
Hence, taking the limit $\delta \to 0$ in the equation on $\varphi^{\delta}$, we see that $\varphi$ indeed satisfies the adjoint equation and the proof is over.
 \end{proof}
 We now turn to the proof of the uniqueness theorem.

 \begin{proof}[Proof of Theorem \ref{UniquenessSerrin}]
  Let $\rho = \rho(x)$ be a radial mollifying kernel and define $\rho_{\varepsilon}(x) := \varepsilon^{-d} \rho(\frac{x}{\varepsilon})$.
 Convolving the equation on $a$ by $\rho_{\varepsilon}$ gives, denoting $a_{\varepsilon} := \rho_{\varepsilon} \ast a$,
$$
  (C_{\varepsilon}) \ \ \partial_t a_{\varepsilon} + \nabla \cdot (a_{\varepsilon} \otimes v) - \nu\Delta a_{\varepsilon} 
  = \nabla \cdot (w \otimes a_{\varepsilon}) + C^{\varepsilon} + D^{\varepsilon},
$$
 where the commutator $C^{\varepsilon}$ has been defined in Lemma \ref{LemmeCommutateur}.
 The second commutator is defined by
 $$D^{\varepsilon} :=  \rho_{\varepsilon} \ast \nabla \cdot (a \otimes w) - \nabla \cdot (w \otimes a_{\varepsilon}).$$
 Similarly to what we proved for $C^{\varepsilon}$, we have
 $$\|D^{\varepsilon}\|_{L^1(\mathbb{R}_+ \times \mathbb{R}^d)} \to 0 \text{ as $\varepsilon \to 0$.} $$
Notice that even without any smoothing in time, $a_{\varepsilon}$, $\partial_t a_{\varepsilon}$ lie respectively in $L^{\infty}(\mathbb{R}_+, \mathcal{C}^{\infty}(\mathbb{R}^d))$ and 
$L^1(\mathbb{R}_+, \mathcal{C}^{\infty}(\mathbb{R}^d))$, which is enough to make the upcoming computations rigorous.
In what follows, we let $\varphi^{\delta}$ be a solution of the Cauchy problem $(-C'_{\delta})$, with  $(-C'_{\delta})$ being $(-C')$ where $v$ and $a$ are replaced by $v_{\delta}$ and $a_{\delta}$.
Let us now multiply, for $\delta, \varepsilon > 0$ the equation $(C_{\varepsilon})$ by $\varphi^{\delta}$ and integrate in space and time.
After integrating by parts (which is justified by the high regularity of the terms we have written), we get
$$
 \int_0^T \int_{\mathbb{R}^d} \partial_t a_{\varepsilon}(s,x) \varphi^{\delta}(s,x) dx ds = 
 \langle a_{\varepsilon}(T), \varphi_0 \rangle_{\mathcal{D}'(\mathbb{R}^d), \mathcal{D}(\mathbb{R}^d)}  - 
 \int_0^T \int_{\mathbb{R}^d} a_{\varepsilon}(s,x)  \partial_t \varphi^{\delta}(s,x) dx ds.
$$
From this identity, it follows that
\begin{multline*}
 \langle a_{\varepsilon}(T), \varphi_0 \rangle_{\mathcal{D}'(\mathbb{R}^d), \mathcal{D}(\mathbb{R}^d)}  
 = \int_0^T \int_{\mathbb{R}^d} \varphi^{\delta}(s,x) ( C^{\varepsilon} + D^{\varepsilon})(s,x) dx ds \: +\\  
 \int_0^T \int_{\mathbb{R}^d} a_{\varepsilon}(s,x) 
 \left(\partial_t \varphi^{\delta}(s,x)+ \nabla \cdot (v(s,x)\varphi^{\delta}(s,x))+ \nu\Delta\varphi^{\delta}(s,x) - ^t \nabla \varphi^{\delta}(s,x) \cdot w(s,x) \right) dx ds.
\end{multline*}
From Lemma \ref{LemmeCommutateur}, we know in particular that $C^{\varepsilon}$ belongs to $L^1(\mathbb{R}_+ \times \mathbb{R}^d)$ for each fixed $\varepsilon > 0$ and the same goes for $D^{\varepsilon}$.
Thus, in the limit $\delta \to 0$, we have, for each $\varepsilon > 0$,
$$\int_0^T \int_{\mathbb{R}^d} \varphi^{\delta}(s,x) (C^{\varepsilon} + D^{\varepsilon})(s,x) dx ds \to \int_0^T \int_{\mathbb{R}^d} \varphi(s,x) (C^{\varepsilon} + D^{\varepsilon})(s,x) dx ds. $$
On the other hand, the definition of $\varphi^{\delta}$ gives
$$- \partial_t \varphi^{\delta} - \nabla \cdot (v \varphi^{\delta}) - \nu\Delta \varphi^{\delta} + ^t \nabla \varphi^{\delta} \cdot w
= \nabla \cdot ((v_{\delta} - v) \varphi^{\delta}) + ^t \nabla \varphi^{\delta} \cdot (w - w_{\delta}). $$
Thus, the last integral in the above equation may be rewritten, integrating by parts,
$$- \int_0^T \int_{\mathbb{R}^d} \varphi^{\delta} (v_{\delta} - v)\cdot  \nabla a_{\varepsilon}(s,x) dx ds 
+ \int_0^T \int_{\mathbb{R}^d} \varphi^{\delta}(s,x) \nabla \cdot ( (w-w_{\delta})(s,x) \otimes a_{\varepsilon}(s,x)) dx ds. $$
For each fixed $\varepsilon$, the assumption on $a$ entails that $\nabla a_{\varepsilon}$ belongs to $L^2(\mathbb{R}_+ \times \mathbb{R}^d)$.
Furthermore, it is an easy exercise to show that
$$\|v_{\delta} - v\|_{L^2(\mathbb{R}_+ \times \mathbb{R}^d)} \leq 
\delta \|\nabla v\|_{L^2(\mathbb{R}_+ \times \mathbb{R}^d)} \||\cdot|\rho\|_{L^1(\mathbb{R}^d)} $$
and
$$\|\nabla \cdot ((w - w_{\delta}) \otimes a_{\varepsilon})\|_{L^1(\mathbb{R}_+ \times \mathbb{R}^d)} \to 0 \text{ as } \delta \to 0, \text{ for any fixed } \varepsilon. $$
Now, taking the limit $\delta \to 0$ while keeping $\varepsilon > 0$ fixed, we have
$$
  \langle a_{\varepsilon}(T), \varphi_0 \rangle_{\mathcal{D}'(\mathbb{R}^d), \mathcal{D}(\mathbb{R}^d)} = \int_0^T \int_{\mathbb{R}^d} \varphi(s,x) (C^{\varepsilon} + D^{\varepsilon})(s,x) dx ds .
$$
Taking the limit $\varepsilon \to 0$ and using Lemma \ref{LemmeCommutateur}, we finally obtain
$$
   \langle a(T), \varphi_0 \rangle_{\mathcal{D}'(\mathbb{R}^d), \mathcal{D}(\mathbb{R}^d)} = 0.
$$
This being true for any test function $\varphi_0$, $a(T)$ is the zero distribution and finally $a \equiv 0$.
 \end{proof}

\begin{proof}[Proof of Theorem \ref{NouvellePreuveSerrin}]
 Let $\Omega := \nabla \wedge u$ and $\Omega_0 := \nabla \wedge u_0$. 
 The equation on $\Omega$ writes
$$ 
  (NSV) \left \{
\begin{array}{c  c}
    \partial_t \Omega + \nabla \cdot  (\Omega \otimes u) - \Delta \Omega = \nabla \cdot ( u \otimes \Omega)\\
    \Omega (0)  = \Omega_0.  \\
\end{array}
\right.
$$
 Let $\chi = \chi(t)$ be a smooth cutoff in time supported inside $]T_1,T_2[$.
 Let $\varphi = \varphi(t)$ be another smooth cutoff such that
 $$\text{supp } \chi \subset \{\varphi \equiv 1 \}. $$
 Denoting $\Omega' = \chi \Omega$ and $u' = \varphi u$, we have
$$ 
  (NSV') \left \{
\begin{array}{c  c}
    \partial_t \Omega' + \nabla \cdot  (\Omega' \otimes u) - \Delta \Omega' = \nabla \cdot ( u' \otimes \Omega') + \Omega \partial_t \chi\\
    \Omega' (0)  = 0.  \\
\end{array}
\right.
$$
 Following the same lines as for Theorem \ref{ExistenceSerrin}, we sketch a way to build a solution $\Omega''$ of
 $$ 
 \left \{
\begin{array}{c  c}
    \partial_t \Omega'' + \nabla \cdot  (\Omega'' \otimes u) - \Delta \Omega'' = \nabla \cdot ( u' \otimes \Omega'') + \Omega \partial_t \chi\\
    \Omega'' (0)  = 0.  \\
\end{array}
\right.
$$
belonging to 
$$L^{\infty}(\mathbb{R}_+, L^2(\mathbb{T}^3)) \cap L^2(\mathbb{R}_+, \dot{H}^1(\mathbb{T}^3)). $$
 For $\delta > 0$, let $u_{\delta}, u'_{\delta}$ and $\Omega_{\delta}$ be smooth space mollifications of $u,u'$ and $\Omega$ respectively.
 By the Friedrichs method and heat kernel estimates, there exists a smooth solution $\Omega''_{\delta}$ of 
 $$ 
  \left \{
\begin{array}{c  c}
    \partial_t \Omega''_{\delta} + \nabla \cdot  (\Omega''_{\delta} \otimes u_{\delta}) - \Delta \Omega''_{\delta} 
    = \nabla \cdot ( u'_{\delta} \otimes \Omega''_{\delta}) + \Omega_{\delta} \partial_t \chi\\
    \Omega''_{\delta} (0)  = 0.  \\
\end{array}
\right.
$$
Performing an energy estimate in $L^2(\mathbb{T}^3)$ gives
\begin{multline*}
 \frac 12  \|\Omega''_{\delta}(t)\|_{L^2(\mathbb{T}^3)}^2 +  \int_0^t \|\nabla \Omega''_{\delta}(s)\|_{L^2(\mathbb{T}^3)}^2 ds \\ 
 = \int_0^t \int_{\mathbb{T}^3} \Omega''_{\delta}(x,s) \cdot 
\left( \nabla \cdot \left(u'_{\delta}(x,s) \otimes \Omega''_{\delta}(x,s) \right) + \Omega_{\delta}(x,s) \partial_t \chi(s) \right) dx ds.
\end{multline*}
The right-hand side decomposes in two terms, which we estimate separately.
For the first one, we integrate by parts and use H\"older inequality to get
\begin{multline*}
  \int_0^t \int_{\mathbb{T}^3} \Omega''_{\delta}(x,s) \cdot 
\left( \nabla \cdot \left(u'_{\delta}(x,s) \otimes \Omega''_{\delta}(x,s) \right) \right) dx ds \\
= - \int_0^t \int_{\mathbb{T}^3} \nabla \Omega''_{\delta}(x,s) :  
\left(u'_{\delta}(x,s) \otimes \Omega''_{\delta}(x,s) \right)  dx ds \\
\leq \int_0^t \|\nabla \Omega''_{\delta}(s) \|_{L^2(\mathbb{T}^3)} \| u'_{\delta}(s)\|_{L^q(\mathbb{T}^3)}
\|\Omega''_{\delta}(s)\|_{L^{\tilde{q}}(\mathbb{T}^3)} ds,
\end{multline*}
where $\tilde{q}$ is defined by 
$$\frac{1}{\tilde{q}} = \frac 12 - \frac 1q. $$
The Sobolev embedding $\dot{H}^{\frac 3q}(\mathbb{T}^3) \hookrightarrow L^{\tilde{q}}(\mathbb{T}^3)$ gives
$$
 \|\Omega''_{\delta}(s)\|_{L^{\tilde{q}}(\mathbb{T}^3)}
\lesssim \|\nabla \Omega''_{\delta}(s)\|_{L^2(\mathbb{T}^3)}^{\frac 3q} \|\Omega''_{\delta}(s)\|_{L^2(\mathbb{T}^3)}^{\frac 2p}.
$$
Hence,
\begin{multline*}
  \int_0^t \int_{\mathbb{T}^3} \Omega''_{\delta}(x,s) \cdot 
\left( \nabla \cdot \left(u'_{\delta}(x,s) \otimes \Omega''_{\delta}(x,s) \right) \right) dx ds \\
\lesssim \int_0^t \|\nabla \Omega''_{\delta}(s)\|_{L^2(\mathbb{T}^3)}^{1+\frac 3q} 
\|u'_{\delta}(s)\|_{L^q(\mathbb{T}^3)} \|\Omega''_{\delta}(s)\|_{L^2(\mathbb{T}^3)}^{\frac 2p} ds.
\end{multline*}
Young inequality entails the existence of a constant $C$ depending only on $q$ such that 
\begin{multline*}
 \|\nabla \Omega''_{\delta}(s)\|_{L^2(\mathbb{T}^3)}^{1+\frac 3q} 
\| u'_{\delta}(s)\|_{L^q(\mathbb{T}^3)} \|\Omega''_{\delta}(s)\|_{L^2(\mathbb{T}^3)}^{\frac 2p} \\
\leq \frac 12 \|\nabla \Omega''_{\delta}(s) \|_{L^2(\mathbb{T}^3)}^2
 + \frac C2 \| u'_{\delta}(s)\|_{L^q(\mathbb{T}^3)}^p \|\Omega''_{\delta}(s)\|_{L^2(\mathbb{T}^3)}^2
\end{multline*}
The second term is easier to bound.
Indeed, thanks to the Cauchy-Schwarz inequality,
\begin{multline*}
  \int_0^t \int_{\mathbb{T}^3} \Omega''_{\delta}(x,s) \cdot \Omega_{\delta}(x,s) \partial_t \chi(s) ds 
\leq \int_0^t \|\Omega''_{\delta}(s)\|_{L^2(\mathbb{T}^3)} \|\Omega_{\delta}(s)\|_{L^2(\mathbb{T}^3)} |\partial_t \chi(s)| ds \\
\leq \frac 12 \int_0^t \|\Omega''_{\delta}(s)\|_{L^2(\mathbb{T}^3)}^2 ds 
+ \frac 12 \|\partial_t \chi\|_{L^{\infty}(\mathbb{R}_+)}^2\int_0^t \|\Omega_{\delta}(s)\|_{L^2(\mathbb{T}^3)}^2  ds.
\end{multline*}
Gathering these estimates, we have shown that, for some constant $C$ depending only on $q$,
\begin{multline*}
 \|\Omega''_{\delta}(t)\|_{L^2(\mathbb{T}^3)}^2 +  \int_0^t \|\nabla \Omega''_{\delta}(s)\|_{L^2(\mathbb{T}^3)}^2 ds \\ 
 \leq \int_0^t  \left(1+  C \| u'_{\delta}(s)\|_{L^q(\mathbb{T}^3)}^p\right)  \|\Omega''_{\delta}(s)\|_{L^2(\mathbb{T}^3)}^2 ds
 + \|\partial_t \chi\|_{L^{\infty}(\mathbb{R}_+)}^2 \|\Omega_{\delta}\|_{L^2(\mathbb{R}_+ \times \mathbb{T}^3)}^2.
\end{multline*}
Since $u'_{\delta}$ and $\Omega_{\delta}$ are mollifications of $u'$ and $\Omega$ respectively, for any $s \in \mathbb{R}_+$ and any $\delta > 0$, there holds
$$\|u'_{\delta}(s)\|_{L^q(\mathbb{T}^3)} \leq \|u'(s)\|_{L^q(\mathbb{T}^3)}$$
and
$$\|\Omega_{\delta}(s)\|_{L^2(\mathbb{T}^3)} \leq \|\Omega(s)\|_{L^2(\mathbb{T}^3)}.$$
Combining these facts to Gr\"onwall's inequality entails the bound
\begin{multline*}
 \|\Omega''_{\delta}(t)\|_{L^2(\mathbb{T}^3)}^2 +  \int_0^t \|\nabla \Omega''_{\delta}(s)\|_{L^2(\mathbb{T}^3)}^2 ds \\ 
 \leq \|\partial_t \chi\|_{L^{\infty}(\mathbb{R}_+)}^2 \|\Omega\|_{L^2(\mathbb{R}_+ \times \mathbb{T}^3)}^2
 \exp\left(t+ C\int_0^t  \| u'(s)\|_{L^q(\mathbb{T}^3)}^p ds \right).
\end{multline*}
It only remains to pass to the limit.
From the uniform $L^{\infty}(\mathbb{R}_+, L^2(\mathbb{T}^3)) \cap L^2(\mathbb{R}_+, \dot{H}^1(\mathbb{T}^3))$ bound on the $(\Omega''_{\delta})_{\delta}$, there exists some
$\Omega'' \in L^{\infty}(\mathbb{R}_+, L^2(\mathbb{T}^3)) \cap L^2(\mathbb{R}_+, \dot{H}^1(\mathbb{T}^3))$ such that, up to an extraction,
$$\Omega''_{\delta} \rightharpoonup \Omega''\text{ weakly in } L^{\infty}(\mathbb{R}_+, L^2(\mathbb{T}^3)) \cap L^2(\mathbb{R}_+, \dot{H}^1(\mathbb{T}^3)) \text{ as } \delta \to 0 $$
This weak convergence allows us to pass to the limit in the equation on $\Omega''_{\delta}$, thanks to the strong convergences
$$u_{\delta}, u'_{\delta} \to u, u' \text{ strongly in } L^2(\mathbb{R}_+, \dot{H}^1(\mathbb{T}^3)) \text{ as } \delta \to 0. $$
Such an $\Omega''$ thus belongs to 
$$L^{\infty}(\mathbb{R}_+, L^2(\mathbb{T}^3)) \cap L^2(\mathbb{R}_+, \dot{H}^1(\mathbb{T}^3))$$
and solves, as required,
 $$ 
 \left \{
\begin{array}{c  c}
    \partial_t \Omega'' + \nabla \cdot  (\Omega'' \otimes u) - \Delta \Omega'' = \nabla \cdot ( u' \otimes \Omega'') + \Omega \partial_t \chi\\
    \Omega'' (0)  = 0.  \\
\end{array}
\right.
$$
 Now, letting $\widetilde{\Omega} := \Omega' - \Omega''$, we see that $\widetilde{\Omega}$ solves
$$
  (NSV^0) \left \{
\begin{array}{c  c}
    \partial_t \widetilde{\Omega} + \nabla \cdot  (\widetilde{\Omega} \otimes u) - \Delta \widetilde{\Omega} = \nabla \cdot ( u' \otimes \widetilde{\Omega})\\
    \widetilde{\Omega} (0)  = 0.  \\
\end{array}
\right.
$$
 We recall that $u$ and $u'$ belong to $L^2(\mathbb{R}_+, \dot{H}^1(\mathbb{T}^3))$ and by assumption, $u'$ further belongs to $L^p(\mathbb{R}_+, L^q(\mathbb{T}^d))$.
 Moreover, the high regularity of $\Omega''$ and the fact that $u$ is a Leray solution of the Navier-Stokes equations together entail that $\widetilde{\Omega}$ belongs to 
 $L^2(\mathbb{R}_+ \times \mathbb{T}^3)$.
 These regularity assumptions allow us to invoke Theorem \ref{UniquenessSerrin}, from which we deduce that $\widetilde{\Omega} \equiv 0$.
 It follows that
 $$\Omega \in  L^{\infty}_{loc}(]T_1,T_2[, L^2(\mathbb{T}^3)) \cap L^2_{loc}(]T_1,T_2[, \dot{H}^1(\mathbb{T}^3)). $$ 
 Since $u$ is the inverse curl of $\Omega$, the above regularity on $\Omega$ is equivalent to
 $$u\in  L^{\infty}_{loc}(]T_1,T_2[, \dot{H}^1(\mathbb{T}^3)) \cap L^2_{loc}(]T_1,T_2[, \dot{H}^2(\mathbb{T}^3)). $$ 
 From here, improving again the regularity on $\Omega$ and $u$ relies on an induction procedure, which is tedious to write thoroughly but not difficult.
 We need to prove that, for all $s \in \mathbb{N}$, we have
 $$\Omega \in L^{\infty}_{loc}(]T_1,T_2[, \dot{H}^s(\mathbb{T}^3)) \cap L^2_{loc}(]T_1,T_2[, \dot{H}^{s+1}(\mathbb{T}^3)), $$
 which is equivalent to requiring 
 $$u\in  L^{\infty}_{loc}(]T_1,T_2[, \dot{H}^{s+1}(\mathbb{T}^3)) \cap L^2_{loc}(]T_1,T_2[, \dot{H}^{s+2}(\mathbb{T}^3)). $$ 
 The case $s = 0$ is exactly what we just proved.
 To go from the step $s$ to the step $s+1$, we simply compute all the space derivatives of order $s+1$ of the equation on $\Omega'$.
 More precisely, denoting by $\partial^{s+1}$ a generic space derivative of order $s+1$, we have
 $$\partial_t \partial^{s+1} \Omega'  + \nabla \cdot (u \otimes \partial^{s+1} \Omega') - \Delta \partial^{s+1} \Omega'
 = \nabla (\partial^{s+1}\Omega' \otimes u) + (\text{l.o.t in } \Omega').$$
 Performing an energy estimate in $L^2(\mathbb{T}^3)$ as above and using Theorem \ref{UniquenessSerrin}, we get
 $$\partial^{s+1}\Omega'  \in L^{\infty}_{loc}(]T_1,T_2[, L^2(\mathbb{T}^3)) \cap L^2_{loc}(]T_1,T_2[, \dot{H}^1(\mathbb{T}^3)),$$
 which is what we wanted.
 Time derivatives may now be handled by a similar induction argument, which we will not write.
 This closes the proof.
 \end{proof}

  \begin{proof}[Proof of Theorem \ref{BorneL1VorticiteForte}]
   Given the assumptions we made, we compute the vorticity equation by taking the curl on each side of the Navoer-Stokes equations. 
   Let $s < t$ be two real numbers in $]0,T[$.
   Let $\varphi : [0,t-s] \times \mathbb{R}^d \to \mathbb{R}^d$ be a solution of the adjoint equation satisfying the bound \eqref{GrowingMaximumPriciple}.
   Imitating the proof of Theorem \ref{UniquenessSerrin} for the time interval $[s,t]$, we arrive at 
   \begin{equation}
    \langle \Omega(t), \varphi_0 \rangle_{L^1(\mathbb{R}^d), L^{\infty}(\mathbb{R}^d)} = \langle \Omega(s), \varphi(t-s)\rangle_{L^1(\mathbb{R}^d), L^{\infty}(\mathbb{R}^d)}.
   \end{equation}
Thanks to the bound \eqref{GrowingMaximumPriciple}, for any $\varphi_0$ in $\mathcal{D}(\mathbb{R}^d)$, we have 
\begin{equation}
 |\langle \Omega(t), \varphi_0 \rangle_{L^1(\mathbb{R}^d), L^{\infty}(\mathbb{R}^d)}| \leq 
 \|\Omega(s)\|_{L^1(\mathbb{R}^d)}\|\varphi_0\|_{L^{\infty}(\mathbb{R}^d)} \exp\left[\frac{C^p}{p\nu^{p-2}} \int_s^t \|u(s)\|_{L^q(\mathbb{R}^d)}^p ds \right].
\end{equation}
Taking the supremum over all possible $\varphi_0$ and letting $s \to 0$ yields the result.
  \end{proof}

\subsection*{Acknowledgements}

The author is grateful to L. Sz\'ekelyihidi both for his kind invitation to Universit\"at Leipzig and the fruitful discussions which led to notable improvements of the paper.
\textbf{TODO : virer l'approximation et ne montrer qu'une estimation a priori pour l'existence}

\end{document}